\pdfoutput=1
\documentclass[a4paper,12pt]{amsart}
\usepackage{amsmath,amssymb,amsthm,mathtools,amscd}
\usepackage{array}
\usepackage{enumitem}

\newtheorem{theorem}{Theorem}[section]
\newtheorem{proposition}[theorem]{Proposition}
\newtheorem{lemma}[theorem]{Lemma}
\newtheorem{corollary}[theorem]{Corollary}
\theoremstyle{definition}
\newtheorem{definition}[theorem]{Definition}
\newtheorem{remark}[theorem]{Remark}
\newtheorem{example}[theorem]{Example}

\newcommand{\Mod}{\operatorname{Mod}}
\newcommand{\cH}{\mathcal H}
\newcommand{\Z}{\mathbb Z}
\newcommand{\C}{\mathbb C}
\newcommand{\eps}{\varepsilon}

\title[Planar Calabi--Yau Fillings and Quantum Computation]{Planar Contact
Structures with Calabi--Yau Fillings and Topological Quantum Computation}
\author{Atsuhide Mori}
\address{Department of Mathematics, Osaka Dental University}
\email{mori-a@cc.osaka-dent.ac.jp}
\subjclass[2020]{Primary 57K33; Secondary 53D35, 20F36, 81P68}
\keywords{planar openbook, Stein filling, positive factorization,
Calabi--Yau filling, simple branched cover, liftable braid, subopenbook,
Ising representation, Pauli context}

\begin{document}

\begin{abstract}
We study planar openbooks obtained by lifting braids through branched
covers of $D^2$, together with the quantum operations in the Ising
representation.
We give a criterion for the Stein fillings to be Calabi--Yau (CY).
Among positive factorizations of a fixed monodromy, a CY one has minimal
length and its filling minimizes $\chi$ and $b_2$.
Applied to Baykur's recent examples, this gives a planar contact
$3$-manifold with infinitely many non-homeomorphic CY fillings, the cover
there having degree $\ge 6$.
At degree $4$ a single CY filling forces every filling to be CY, and in one
case the filling is unique; at degree $\le 3$ it is unique, and CY under a
mild condition.
Admissible cuts of $D^2$ decompose the openbook into subopenbooks.
Every positive factorization then localizes, and the CY condition holds
exactly when it holds locally. The state space is then a direct sum of
tensor products indexed by the compatible parity choices, with at most two
qubits per factor when the pieces have degree $\le 4$, the same range in
which the CY condition depends only on the monodromy.
For one degree $4$ cover, the points, lines and flags of the two-qubit doily
are realized by its subopenbooks. Fifteen liftable braids there share the
same quantum operation and Stein filling, and are separated only by the
subopenbook systems they admit. A choice of tensor-product structure is thus
carried by the lift, not by the braid group representation, suggesting a link
between contact topology and quantum entanglement.
\end{abstract}

\maketitle

\section{Introduction}
We write \emph{openbook} as one word. We focus on openbooks 
whose page is planar with $d$ boundary components obtained by lifting braids through degree-$d$ 
branched covers of the disk. 
Such openbooks are intended for studying contact topology, 
while the braids also act on quantum state spaces through a unitary representation of the braid group. 
Different braids can represent the same quantum operation while giving different
openbooks. We investigate this freedom particularly in a geometric realization of the Ising representation
$\rho: B_{2d-2}\to \operatorname{PU}(2^{d-2})$. 

A \emph{quasipositive braid} is a product of positive half-twists along arcs joining branch points. 
When each of the factors lifts to a positive Dehn twist, the lifted
factorization determines a Stein filling. Every operation in the Ising representation admits such a realization. 
We call a filling $W$ Calabi--Yau (CY) when $c_1(W)=0$.
For a planar page this is the integral equation
$V^Tu=\mathbf1$, where $V$ is the vanishing-cycle matrix,
and is therefore decided by the factorization alone.
We prove that a CY factorization has
minimal length among positive factorizations of the same monodromy.
Its filling also minimizes $\chi$ and $b_2$ among Stein fillings
associated to that monodromy.

We then cut the covering disk along proper arcs and decompose the
openbook into \emph{subopenbooks} so that the page of the ambient openbook is the
boundary connected sum of the subopenbook pages. 
Then we show that every positive factorization localizes to them,
and the CY condition holds exactly when it holds on each piece.
Accordingly, the state space decomposes as the direct sum of those
of the pieces, over the parity sector choices compatible with the total parity.

This locality is what brings the CY condition into the picture. On a piece of
degree at most four the condition no longer depends on the factorization but
only on the monodromy, and that range is exactly the one in which the tensor
factors carry at most two qubits, the Majorana bilinears exhaust the Pauli
operators and the matchings exhaust the maximal contexts. Beyond it a fixed
monodromy can have infinitely many fillings with the same numerical invariants,
and the description of a piece as a system of qubits is no longer supplied by
the cover. The CY condition is thus not an extra hypothesis taken from
symplectic topology, but the topological side of the range in which the quantum
structure can be named at all.

For the degree-four cover, the six branch points give a two-qubit state
space, the fifteen pairs of branch points label the fifteen nontrivial
Pauli operators, and the fifteen partitions into three pairs correspond to the
maximal contexts via braids which lift to the same CY factorization. Thus, the ambient openbook does
not distinguish contexts. Instead, its subopenbooks do: the points, lines and
flags of the doily are the pairs of branch points, the arc pairings of the
cover, and the cuts separating one pair from the other two.

This separation is sharp. The fifteen braids realizing the fifteen contexts all
act as the identity operation in the Ising representation, and all lift to the
same factorization and hence to the same Stein filling; what distinguishes them
is only which cuts they admit. A representation of the braid group cannot record
this, having a single object, whereas a cut is an object on which the braid
acts. In anyonic terms a cut with its pairings is a fusion tree
\cite{NSSFD2008}, and what the lift adds is which fusion trees a given braid
admits. Read this way, the Ising representation supplies the operations while
the system of subopenbooks supplies the tensor-product structures on which they
act, and the latter is a four-dimensional datum: a cut is a boundary connected
sum decomposition of the Stein filling along which the CY condition is
hereditary.

Our geometry gives examples of quantum subsystems for describing a fixed state.
On the other hand, a two-qubit example shows that even a fixed pairing admits coordinates in which the same vector is a pure
tensor or is not. We ask how such choice of coordinates relate to our cut system.
This viewpoint is motivated by the topos approach to quantum computation in the
companion paper \cite{MoriBayesian2026}, where the same two-qubit context poset
is reached from the side of topos quantum theory.

\section{Branched Covers and Planar Open Books}\label{sec:branched-covers}
A planar surface $P=\Sigma_{0,d}$ is a $(d-1)$-holed disk with boundary 
\[
 \partial P=\partial_0P\sqcup\partial_1P\sqcup\cdots
 \sqcup\partial_{d-1}P\cong \bigsqcup_d S^1
\]
where $\partial_0P$ is the outer boundary component. 
An abstract planar openbook is a pair
$(P,\phi)$ with monodromy class 
\[
 \phi\in\Mod(P\operatorname{rel}\partial P)
 :=\pi_0\{\textrm{diffeomorphisms of $P$ fixing near $\partial P$}\}. 
\]
Filling the boundary of the mapping torus of $\phi$ with the tori $\partial P\times D^2$ yields a closed contact $3$-manifold. 
We say that the contact structure is supported by the openbook. 
The classical Hurwitz theory of simple branched covers gives the following
picture.

\begin{proposition}
[Arc pairing]
\label{prop:paired-cover}
Every degree-$d$ simple branched cover $p:P=\Sigma_{0,d}\longrightarrow D^2$ 
admits pairwise disjoint embedded arcs
$a_1,\ldots,a_{d-1}$ on $D^2$ whose endpoints form the branch set $\Delta\subset \operatorname{int} D^2$ 
and whose preimage contains simple closed curves $\gamma_j\subset p^{-1}(a_j)$ parallel to $\partial_j P$ ($j=1,\dots,d-1$).
With the boundary-induced sheet labeling, the two endpoints of $a_j$
have local monodromy $(0j)$. We also label the branch points $1,\dots, 2d-2$ so that their monodromies are 
\[
 (01), (01), (02), (02), \ldots, (0,d-1), (0,d-1).
\]
\end{proposition}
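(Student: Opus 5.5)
Riemann--Hurwitz fixes the number of branch points. Since $\chi(\Sigma_{0,d})=2-d$ and $\chi(D^2)=1$, we get $2-d=d\cdot 1-|\Delta|$, so $|\Delta|=2d-2$. The monodromy representation $\mu:\pi_1(D^2\setminus\Delta,*)\to S_d$ is transitive, because $P$ is connected, and each standard generator maps to a transposition. Because $\partial D^2$ is a single circle, the boundary monodromy $\mu(\partial D^2)$ is a product of the $2d-2$ transpositions. Its cycles correspond to the components of $\partial P=p^{-1}(\partial D^2)$. There are $d$ such components and $d$ sheets, so every cycle has length one: $\mu(\partial D^2)=\mathrm{id}$, and each boundary component $\partial_iP$ maps diffeomorphically onto $\partial D^2$. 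This is the \emph{boundary-induced sheet labeling}: sheet $i$ is the one containing $\partial_iP$, with $\partial_0P$ the outer component.

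The plan is to reduce to a normal form of the Hurwitz system by Hurwitz moves. Hurwitz moves act on the tuple $(t_1,\dots,t_{2d-2})$ of transpositions with product $\mathrm{id}$ that generate a transitive subgroup; equivalently, they re-choose the generating loops by half-twists of the base point system. They preserve the cover and the labeling, since they do not move $\partial D^2$. By the classical normal form for simple covers, attributed to Clebsch and Hurwitz and also in Berstein--Edmonds, such a tuple is Hurwitz equivalent to
\[
(01),(01),(02),(02),\dots,(0,d-1),(0,d-1).
\]
I would sketch the reduction by induction on $d$. Transitivity gives a transposition moving $0$. Hurwitz moves slide it to the front, conjugating into $(01)$ after relabeling sheets $1,\dots,d-1$ only, so that the labeling remains boundary-induced up to renaming the inner holes. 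Next, use a move to bring a second $(01)$ adjacent; this is possible because the product is trivial and $\{1\}$ must be re-joined. The remaining tuple then has trivial product on fewer sheets, and we proceed. This combinatorial step is the main obstacle. The key lemma to verify is the following: if $\tau_1=(01)$ and sheet $1$ is touched by no other transposition, then some later transposition equals $(01)$ after Hurwitz moves. This holds because the product is trivial, so the number of transpositions touching $1$ is even. One pushes the transpositions touching $1$ leftward with moves $(\sigma,\tau)\mapsto(\tau,\tau\sigma\tau)$ until one of them becomes $(01)$.

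Given the normal form, choose the standard loop system and take for $a_j$ the arc joining branch points $2j-1$ and $2j$ that runs along the corresponding pair of adjacent loops. These arcs are disjoint because consecutive pairs occupy disjoint sectors. Locally, $p^{-1}(a_j)$ is the union of a circle on sheets $0$ and $j$, since both endpoints have monodromy $(0j)$ and the arc lifts to two copies glued at the endpoints, together with $d-2$ trivial arc lifts. This circle $\gamma_j$ bounds, together with $\partial_jP$, the annulus lying over a neighborhood of the outer side of $a_j$ on sheet $j$. Indeed, a loop around $a_j$ has monodromy $(0j)^2=\mathrm{id}$, and sheet $j$ sees no other branching. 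Hence $\gamma_j$ is parallel to $\partial_jP$. For this last step I would use the innermost-pair picture: cut $D^2$ along a curve $c_j$ enclosing only $a_j$, and note that $p^{-1}$ of the outside restricted to sheet $j$ is an annulus from $p^{-1}(c_j)\cap\text{sheet }j$ to $\partial_jP$.
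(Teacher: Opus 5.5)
Your overall route is the one the paper has in mind. The paper gives no argument beyond ``classical Hurwitz theory,'' and your Riemann--Hurwitz count and your use of the trivial boundary monodromy to get the boundary-induced labeling are fine. So is your last step: the sheet-$j$ component over the complement of a disk around $a_j$ is an unbranched annulus, because every generator there fixes $j$, so $\gamma_j$ is parallel to $\partial_jP$.

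The gap is the normal form \emph{centred at the outer sheet $0$}, which the statement needs. If the star were centred at $c\neq 0$, you would get curves parallel to $\partial_0P$ and to $\partial_kP$ for $k\neq 0,c$, but none parallel to $\partial_cP$ when $d\ge3$. The Clebsch--Hurwitz normal form is only up to simultaneous conjugation, so citing it does not put the centre at $0$. The induction you offer instead fails in three places:
\begin{enumerate}[label=\textup{(\roman*)}]
\item The ``key lemma'' has an inconsistent hypothesis. If $(01)$ were the only transposition touching $1$, the product would send $1$ to $0$.
\item The parity claim is false. The tuple $(01),(12),(02),(12)$ is transitive with trivial product, since $(12)(02)(12)=(01)$. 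It is therefore the monodromy of a planar cover $\Sigma_{0,3}\to D^2$ with $t_1=(01)$, yet sheet $1$ is touched three times.
\item After you produce $(01),(01)$, the remaining tuple does not live on fewer sheets. Riemann--Hurwitz applied to the orbits of $\langle t_3,\dots,t_{2d-2}\rangle$ gives exactly two orbits, both of genus zero, one containing $0$ and one containing $1$. In the example above, one move gives $(01),(01),(12),(12)$. So the induction must also re-centre the orbit of $1$ at $0$, which your sketch does not address.
\end{enumerate}

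One observation repairs all of this. Suppose $t_1\cdots t_{2d-2}=1$.
\begin{enumerate}[label=\textup{(\alph*)}]
\item The full twist on the last $2d-3$ strands conjugates those entries by $t_2\cdots t_{2d-2}=t_1$ and leaves $t_1$ fixed, so it realizes simultaneous conjugation by $t_1$.
\item Any entry can be brought to the front unchanged by the move $(\sigma,\tau)\mapsto(\tau,\tau^{-1}\sigma\tau)$.
\item Hurwitz moves commute with simultaneous conjugation, and the entries generate $S_d$.
\end{enumerate}
Together these show the Hurwitz orbit is closed under every simultaneous conjugation. Hence the classical normal form can be centred at sheet $0$ without leaving the boundary-induced labeling; for instance $(01),(01),(12),(12)\sim(01),(01),(02),(02)$. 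With that, the rest of your argument goes through: re-index the pairs $(0j_k),(0j_k)$ so that $a_{j}$ carries $(0j)$.
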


A braid $\beta\in B_{2d-2}=\Mod((D^2, \Delta) \operatorname{rel} \partial D^2)$ is said to be $p$-\emph{liftable} 
if a representative $f$ of $\beta$ induces a representative $\widetilde f$ of a monodromy class on $P$ satisfying $p\circ\widetilde f=f\circ p$. For a simple arc $a\subset D^2$ joining two branch points without touching the others, taking its neighborhood which is topologically a disk, we can perform the positive half-twist $h_a$ exchanging the endpoints of $a$. For a simple closed curve $\gamma\subset\operatorname{int}P$, we can perform the right-handed Dehn twist $\tau_\gamma$ along $\gamma$. All $p$-liftable braids form the liftable subgroup $L_{p}\subset B_{2d-2}$ with lift homomorphism $\ell_p:L_p \to \Mod(P \operatorname{rel} \partial P)$. 
A \emph{quasipositive} braid is a product of positive half-twists
along possibly circuitous arcs. Such a factorization is \emph{factorwise} $p$-liftable
if every factor lies in $L_p$. Each factor then lifts to a right-handed Dehn twist along a loop component of $p^{-1}(a)$. 

Let $\pi: B_{2d-2}\to S_{2d-2}$ be the endpoint permutation map. 
The half-twists on the arcs $a_j$ are $p$-liftable, and the cubes of the short half-twists between two
successive arcs are $p$-liftable by \cite[Lemma~2.3]{MulazzaniPiergallini2001};
their endpoint permutations are the adjacent transpositions generating $S_{2d-2}$. 
Thus the restriction of $\pi$ to $L_p$ is still onto. 
Particularly, for each $j\in \{1,\dots,d-1\}$ and any two branch points $r, s\in \Delta$, 
we can take $\beta \in L_p$ sending $a_j$ to an arc joining $r$ and $s$ to have
\[
 \ell_p(h_{\beta(a_j)})=\ell_p(\beta h_{a_j} \beta^{-1})
 =\ell_p(\beta)\tau_{\partial_j P}\ell_p(\beta^{-1})
 =\tau_{\partial_j P}.
\]
This implies that the endpoint permutation downstairs and the position of the twist
upstairs are distinct pieces of information.

We say that a positive factorization
\[
\phi=\tau_{\gamma_1}\cdots \tau_{\gamma_m}(:=\tau_{\gamma_1}\circ\cdots\circ \tau_{\gamma_m})
\]
is \emph{allowable} if every $[\gamma_i]$ is non-zero in $H_1(P;\mathbb Z)$. It determines a positive
allowable Lefschetz fibration (PALF), and hence a Stein filling
$W_{\gamma_1,\dots,\gamma_m}$ up to deformation equivalence
\cite{LoiPiergallini2001}.
The boundary of the Stein filling carries the contact structure of complex tangency, 
which is the one supported by the openbook.
Following Ghiggini--Golla--Plamenevskaya \cite[Section~2]{GGP2020}, we fix the winding-number basis 
$e_j=-[\partial_j P]$ of $H_1(P;\mathbb Z)$ ($j\in \{1,\dots,d-1\}$)
and orient any vanishing cycle $\gamma$ so that $[\gamma]$ has binary coordinates, that is, every coordinate is either 
$0$ or $1$. For $J\subset \{1,\dots,d-1\}$ we write $e_J=\sum_{j\in J}e_j$, abbreviated
$e_{ij}=e_{\{i,j\}}$ etc. The class of a vanishing cycle is $e_J$
for the set $J$ of holes it encloses.

We have the four Pauli matrices 
\[X = \begin{pmatrix} 0 & 1 \\ 1 & 0 \end{pmatrix},\quad 
Y = \begin{pmatrix} 0 & -i \\ i & 0 \end{pmatrix},\quad
Z = \begin{pmatrix} 1 & 0 \\ 0 & -1 \end{pmatrix},\quad
I = \begin{pmatrix} 1 & 0 \\ 0 & 1 \end{pmatrix}.
\]
Let $\gamma_1,\gamma_2,\dots,\gamma_{2d-2}$ be the Majorana operators on $(\C^2)^{\otimes{d-1}}$ defined by
\[
\gamma_{2j-1}=Z^{\otimes(j-1)}\otimes X\otimes I^{\otimes(d-1-j)},
\quad
\gamma_{2j}=Z^{\otimes(j-1)}\otimes Y\otimes I^{\otimes(d-1-j)}
\]
provided that $d\geq 2$. The parity $\Pi=i^{d-1}\gamma_1\gamma_2 \cdots \gamma_{2d-2}$ has two eigenvalues $\pm1$ each with $2^{d-2}$-dimensional eigenspaces. 
The Ising representation $\rho: B_{2d-2}\to \operatorname{PU}(2^{d-2})$ is defined on each eigenspace so that the image of the standard braid generators $\sigma_k$ ($k=1,\dots,2d-3$) are 
\[
\rho(\sigma_k)=\frac{1}{\sqrt{2}}(I+\gamma_{k+1}\gamma_k)/\operatorname{U}(1). 
\]

\begin{proposition}[Quasipositive realization]
\label{prop:quasipositive-realization}
For every $\beta\in B_{2d-2}$, there is a factorwise $p$-liftable quasipositive
braid $\beta_+\in L_p$ such that
\[
 \pi(\beta_+)=\pi(\beta),\qquad
 \rho(\beta_+)=\rho(\beta).
\]
Its lifted factorization is allowable and hence determines a
Stein filling.
\end{proposition}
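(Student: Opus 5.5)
The plan is to build $\beta_+$ one generator at a time, using the fact that $\pi$ and $\rho$ are homomorphisms. It then suffices, for every half-twist $h_c$ on an arc $c$ joining branch points $r,s$, to find a factorwise $p$-liftable quasipositive braid $q_c$ with $\pi(q_c)=\pi(h_c)$ and $\rho(q_c)=\rho(h_c)$.

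\emph{Removing inverses.} Write $\beta$ as a word in the $\sigma_k^{\pm1}$. Since $\gamma_{k+1}\gamma_k$ squares to $-I$, we have $\rho(\sigma_k)=\exp(\tfrac{\pi}{4}\gamma_{k+1}\gamma_k)$. Hence $\rho(\sigma_k)^4=-I$, which is trivial in $\operatorname{PU}$. Also $\pi(\sigma_k)^2=1$. So replacing each $\sigma_k^{-1}$ by $\sigma_k^3$ leaves $\pi$ and $\rho$ unchanged, and we may assume $\beta$ is a positive word. Each letter is a half-twist $h_c$ on a short arc, which reduces the problem to the single-half-twist statement above.

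\emph{Replacing one half-twist.} For an arc $c$ from $r$ to $s$, the computation displayed after the definition of $\pi$ gives $\beta_0\in L_p$ with $\beta_0(a_1)$ an arc from $r$ to $s$. Put $a=\beta_0(a_1)$. Then $h_a=\beta_0h_{a_1}\beta_0^{-1}\in L_p$ and $\ell_p(h_a)=\tau_{\partial_1P}$, and $\pi(h_a)=(rs)=\pi(h_c)$.

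On the $\rho$ side, conjugation by a braid acts on Majorana operators by signed permutations $\gamma_i\mapsto\pm\gamma_{\pi(i)}$, since it is Clifford conjugation. Hence $\rho(h_a)=\exp(\tfrac{\pi}{4}\epsilon\gamma_s\gamma_r)$ and $\rho(h_c)=\exp(\tfrac{\pi}{4}\epsilon'\gamma_s\gamma_r)$ with $\epsilon,\epsilon'\in\{\pm1\}$. Therefore $\rho(h_a)$ equals either $\rho(h_c)$ or $\rho(h_c)^{-1}=\rho(h_c)^3$.
\begin{itemize}
\item In the first case set $q_c=h_a$.
\item In the second case set $q_c=h_a^3$. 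This is three liftable positive half-twists, with $\pi(h_a^3)=(rs)$ and $\rho(h_a^3)=\rho(h_a)^{-1}=\rho(h_c)$.
\end{itemize}
Multiplying the $q_c$ over the letters of the positive word gives $\beta_+$. It is factorwise $p$-liftable and quasipositive, so in particular $\beta_+\in L_p$, and it has the same $\pi$ and $\rho$ as $\beta$.

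\emph{Allowability.} Every factor of $\beta_+$ has the form $\beta_0h_{a_1}\beta_0^{-1}$ and lifts to $\ell_p(\beta_0)\tau_{\partial_1P}\ell_p(\beta_0)^{-1}$. This is the Dehn twist along the image of a curve parallel to $\partial_1P$ under a diffeomorphism fixing $\partial P$. Such a curve has class $\pm e_1\neq0$ in $H_1(P;\Z)$. The lifted factorization is therefore allowable, and \cite{LoiPiergallini2001} yields the Stein filling.

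The step I expect to be delicate is the sign bookkeeping in the $\rho$ comparison. One has to check that conjugation really acts by signed permutations of the $\gamma_i$, which follows since $\rho$ lands in the Clifford group. One also has to check that the two possible signs differ exactly by inversion of the projective unitary, so that $h_a^3$ absorbs the mismatch. If this is settled, the rest is formal.
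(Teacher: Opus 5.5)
Your proposal is correct, but it takes a different route from the paper.

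\textbf{The paper's route.} It first chooses a factorwise $p$-liftable quasipositive $b$ with $\pi(b)=\pi(\beta)$, which is possible since $\pi|_{L_p}$ is onto. Then $b^{-1}\beta$ is pure. A squared half-twist with endpoints $r,s$ goes to $[\Gamma_{rs}]$, which depends only on the endpoint pair and has order at most two. So $\rho(b^{-1}\beta)$ is matched by a product $\beta''$ of squares of liftable half-twists, and $\beta_+=b\beta''$.

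\textbf{Your route.} You substitute letter by letter: each $\sigma_k^{\pm1}$ becomes $h_a$ or $h_a^3$ with $a=\beta_0(a_1)$. The cost is the sign bookkeeping. Since $\rho(h_a)$ may equal $\rho(h_c)^{\pm1}$, you need two facts:
\begin{itemize}
\item conjugation by $\rho$ acts on the Majorana operators by signed permutations;
\item $\rho(\sigma_k)^4=-I$ is projectively trivial.
\end{itemize}
The paper avoids this issue because $\rho(h_a^2)=[\Gamma_{rs}]$ is insensitive to the sign.

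\textbf{What your route buys.} It is self-contained: you never use generators of the pure braid group. It also makes visible why every factor is liftable. All your factors are conjugates $\beta_0h_{a_1}\beta_0^{-1}$ with $\beta_0\in L_p$, each lifting to $\tau_{\partial_1P}$. The paper's phrase ``squares of positive half-twists'' leaves this implicit. Such a square need not be liftable for an arbitrary arc joining the given endpoints, so the arcs must be taken to be images $g(a_j)$ with $g\in L_p$.

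\textbf{What the paper's route buys.} First, a length bound depending only on $d$. Its $b$ depends only on $\pi(\beta)\in S_{2d-2}$, and $\beta''$ only on $\rho(b^{-1}\beta)$ in the finite group $\mathsf P_{d-2}$. Your $\beta_+$ instead grows linearly with the word length of $\beta$. Second, its split into a permutation part and a Pauli part mirrors the Clifford structure used later through the squared half-twists $h_{a_j^M}^2$.
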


\begin{proof}
Take a quasipositive $p$-liftable braid $b$ realizing $\pi(\beta)$. Then, $\beta'=b^{-1}\beta$ is pure. 
The pure braid group is generated by squared half-twists. In the Ising
representation, the image of a squared half-twist depends
only on its endpoint pair and has order at most two \cite{Bravyi2006,Ahlbrecht2010}. 
Thus, we can realize $\rho(\beta')$ as the image $\rho(\beta'')$ of a product of squares of positive half-twists. 
So we put $\beta_+=b\beta''$. Every half-twist used in the construction of $b$ and $\beta''$
lifts to an inner-boundary twist, so all lifted factors are allowable.
\end{proof}

\section{Positive factorizations and CY condition}\label{sec:cy}
A Stein filling $W$ is \emph{CY} if
\[
 c_1(W)=0\in H^2(W; \Z).
\]
We also call a factorization $\phi=\tau_{\gamma_1}\cdots \tau_{\gamma_m}$
CY when it is positive allowable and its associated Stein filling
$W_{\gamma_1,\dots,\gamma_m}$ is CY.

\begin{definition}[Trivialization class] A \emph{trivialization} of a factorization $\phi=\tau_{\gamma_1}\cdots \tau_{\gamma_m}$ is a class
\[
 u\in H^1(P; \Z)
\]
such that
\[
 u([\gamma_i])=1
 \qquad(i=1,\ldots,m).
\]
\end{definition}

In the winding-number basis, this condition reads $V^Tu=\mathbf1_m$,
where the columns of $V$ are $[\gamma_1],\ldots,[\gamma_m]$.

\begin{proposition}[CY criterion]\label{prop:integral-c1}
A positive allowable factorization is CY if and only if it admits a trivialization.
\end{proposition}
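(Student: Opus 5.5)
We will use the standard handlebody description of the Stein filling attached to a PALF and the known formula for $c_1$ of a Lefschetz fibration over $D^2$. Write $W=W_{\gamma_1,\dots,\gamma_m}$. Since the page $P=\Sigma_{0,d}$ is planar, $P\times D^2$ is a $4$-dimensional $1$-handlebody with $\pi_1=H_1(P;\Z)\cong\Z^{d-1}$. The filling $W$ is obtained by attaching $m$ Weinstein $2$-handles along Legendrian realizations of the vanishing cycles $\gamma_i$ on pages, each with framing $\mathrm{tb}-1$ equal to the page framing minus one.

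Cellular cohomology then gives
\[
H^2(W;\Z)\cong \operatorname{coker}\bigl(V^T: H^1(P;\Z)\to \Z^m\bigr).
\]
Here the $2$-cochains are functions on the $m$ handles, and the coboundary from $1$-cochains, identified with $H^1(P;\Z)\cong\Z^{d-1}$, sends $u$ to $(u([\gamma_i]))_i$, i.e.\ to $V^Tu$. There are no $3$-cells, and $H^2(P\times D^2)=0$ since $P$ is a planar surface deformation retracting to a graph. So $c_1(W)=0$ exactly when a cochain representing $c_1$ lies in the image of $V^T$.

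The key step is identifying that representative. By Gompf's formula for Stein handlebodies, $c_1(W)$ is represented by the cocycle assigning to the $i$-th handle the rotation number $\mathrm{rot}(K_i)$ of its Legendrian attaching knot. Here the rotation is measured with respect to the trivialization of $\xi$ over $\#^{d-1}S^1\times S^2$ induced from the product structure on $P\times D^2$. Equivalently, for a PALF, $c_1$ evaluates on the handle $h_i$ as the rotation number of $\gamma_i$ with respect to a fixed nonvanishing vector field on $P$, as in Etnyre--Ozbagci / Ghiggini--Golla--Plamenevskaya \cite{GGP2020}.

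On the planar surface $P\subset\mathbb R^2$ we take the constant (horizontal) vector field. An embedded curve $\gamma_i$ oriented as in the binary convention, enclosing holes $J$ counterclockwise, then has rotation number $\pm1$, independent of $J$; the global sign is fixed by orientation conventions. Hence the cocycle is $\pm\mathbf 1_m$ in the basis of handles. This uses the binary orientation convention fixed above: any reorientation of a $\gamma_i$ flips both the sign of its rotation and the corresponding column of $V$, so the statement is convention-consistent.

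Combining these, $c_1(W)=0$ if and only if $\pm\mathbf1_m\in\operatorname{im}V^T$, i.e.\ there is $u\in H^1(P;\Z)$ with $u([\gamma_i])=1$ for all $i$ (replace $u$ by $-u$ if needed). This is precisely a trivialization.

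The main obstacle is the careful justification of the rotation-number representative: checking that Gompf's formula, for a Legendrian realization of a curve on a page of a planar openbook, computes the winding of the tangent vector of $\gamma_i$ in the planar trivialization, and that the constant field on $P\subset\mathbb R^2$ extends over the $1$-handlebody compatibly. I would first try to cite the GGP computation directly and only verify the sign convention. Integrality matters here, since $H^2(W;\Z)$ may have torsion; that is why the criterion is the integral equation $V^Tu=\mathbf1$ rather than a rational one, and the cokernel description handles this automatically.
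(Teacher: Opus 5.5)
Your proposal is correct and is essentially the paper's argument: the PALF handle decomposition gives $H^2(W;\Z)=\operatorname{coker}(V^T)$, and the planar rotation-number (obstruction) computation from \cite{GGP2020} shows that $c_1(W)$ is represented by $\mathbf 1_m$ up to a global sign, so $c_1(W)=0$ exactly when $V^Tu=\mathbf 1_m$ has an integral solution. One harmless slip: $\pi_1(P\times D^2)$ is free of rank $d-1$ rather than $\Z^{d-1}$, but only its abelianization $H_1(P;\Z)\cong\Z^{d-1}$ enters the cochain computation.
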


\begin{proof}
Use the standard PALF handle decomposition of $W=W_{\gamma_1,\dots,\gamma_m}$. 
If $P$ has $n$ independent one-handles and the factorization has $m$ vanishing cycles, 
then the relevant cellular cochain complex is
\[
 C^1(W; \Z)\cong\Z^n
 \xrightarrow{\,V^T\,}
 C^2(W; \Z)\cong\Z^m
 \longrightarrow 0
\]
where $V$ is the vanishing-cycle matrix, whose columns are the homology classes $[\gamma_i]$. 
With the planar framing and the chosen orientations, the obstruction
to extending a complex trivialization over each Lefschetz two-handle
is $1$ \cite[proof of Proposition~2.4]{GGP2020}. Thus the first Chern
class is represented by the cochain $\mathbf{1}_m\in\Z^m$, and
\[
 c_1(W)=[\mathbf{1}_m]\in
 H^2(W; \Z)
 =\operatorname{coker}(V^T).
\]
Therefore $c_1(W)=0$ exactly when $\mathbf{1}_m\in\operatorname{im}(V^T)$.
\end{proof}

In particular, the boundary-twist realizations in the proof of 
Proposition~\ref{prop:quasipositive-realization} are CY, with
$u(e_j)=1$ for every $j$.

\begin{lemma}[$m\leq 2\Rightarrow$ CY]\label{lem:short-cy}
Every positive allowable factorization of length at most two on a planar
surface is CY.
\end{lemma}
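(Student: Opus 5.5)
By Proposition~\ref{prop:integral-c1}, it suffices to produce a trivialization: a class $u\in H^1(P;\Z)$ with $u([\gamma_i])=1$ for each factor. I will work in the winding-number basis. There each vanishing cycle has class $e_{J_i}$, where $J_i\subset\{1,\dots,d-1\}$ is the nonempty set of holes enclosed by $\gamma_i$. Nonemptiness is exactly allowability. The cochain $u$ is then an arbitrary integer vector $(u_1,\dots,u_{d-1})$, and the condition reads $\sum_{j\in J_i}u_j=1$ for each $i$.

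For $m=0$ the condition is vacuous, and for $m=1$ we pick any $j_0\in J_1$ and set $u=e_{j_0}^*$. For $m=2$ I split by how $J_1$ and $J_2$ sit relative to each other.

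\begin{itemize}
\item If $J_1\setminus J_2\neq\emptyset$ and $J_2\setminus J_1\neq\emptyset$, choose $a\in J_1\setminus J_2$ and $b\in J_2\setminus J_1$, and take $u=e_a^*+e_b^*$. Then each sum picks up exactly one $1$.
\item If $J_1\subseteq J_2$, take $a\in J_1$ and set $u_a=1$, all other entries $0$. This gives sum $1$ on $J_1$ and on $J_2\ni a$.
\item The case $J_2\subseteq J_1$ is symmetric.
\end{itemize}

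These cases exhaust all possibilities. Equal sets fall under containment, and disjoint sets are a subcase of the first item.

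The only point needing care is that the orientation convention (binary coordinates) really identifies $[\gamma_i]$ with $e_{J_i}$. This was fixed earlier following \cite{GGP2020}, so no sign issues arise. Hence no genuine obstacle is expected. The argument is purely combinatorial, and it shows that integrality fails only once three or more cycles can produce incompatible constraints, for example $J=\{1\},\{2\},\{1,2\}$.
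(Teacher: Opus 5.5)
Your proof is correct and is essentially the paper's. Both use the CY criterion to reduce the lemma to writing down an explicit $0/1$ trivialization in the winding-number basis, supported on one or two coordinates. The only difference is the case split: the paper splits on whether $J_1\cap J_2$ is empty (one common coordinate versus one coordinate in each support), while you split on nesting.

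One small quibble with your closing aside: the example $\{1\},\{2\},\{1,2\}$ already has no solution over $\mathbb Q$, and its $c_1$ has infinite order. It therefore illustrates failure of solvability, not of integrality; the paper's genuinely integral obstruction is the $2$-torsion example in its remark on integrality of the CY criterion.
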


\begin{proof}
For two nonzero binary vectors, assign value $1$ to one coordinate in their
common support, if it is nonempty. Otherwise, assign value $1$ to one
coordinate in each support. Set all other values to $0$. The cases of zero
or one factor are immediate.
\end{proof}

The family $V_{p,q}$ in Asano--Takahashi's relative-trisection classification
consists of two-cycle PALFs on $\Sigma_{0,4}$ \cite{AsanoTakahashi2026}.
It therefore lies in this automatic CY range.

\begin{remark}[Integrality of CY criterion]
Testing only relations $Vb=0$ can miss torsion. Baykur's generalized lantern
relation on $\Sigma_{0,6}$ \cite[Lemma~3]{Baykur2026} gives a concrete
example. Its six left-hand curves have classes
\[
 e_{23},\quad e_{245},\quad e_{135},\quad e_{14},\quad e_{12},\quad e_{34}.
\]
For the resulting matrix $V$, the unique rational solution of
$V^Tu=\mathbf1_6$ is $u=\frac12(1,1,1,1,0)$, whereas
$V^T(1,1,1,1,0)^T=2\mathbf1_6$. Hence the corresponding $c_1$ is nonzero
of order two. On the other hand,
$\ker V=\mathbb Z(1,0,0,1,-1,-1)$ is annihilated by $\mathbf1_6^T$,
so a relation-only test would miss this torsion. This is also the
$2$-torsion issue in the correction recorded by Wand
\cite[Observation~7.5]{Wand2012} to \cite[Corollary~1.5]{OSS2005}.
\end{remark}

The next lemma is the group-theoretic form of the planar hole and joint-hole multiplicities \cite{PVHM2010}.

\begin{lemma}[Linear and quadratic multiplicities]\label{lem:planar-multiplicities}
The vector $V\mathbf{1}_m$ and the matrix $VV^T$ depend only on the mapping class $\phi$, and not on the chosen 
positive factorization $\phi=\tau_{\gamma_1}\cdots \tau_{\gamma_m}$. 
\end{lemma}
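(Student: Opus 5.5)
Because $VV^T=\sum_i [\gamma_i][\gamma_i]^T$ and $V\mathbf 1_m=\sum_i[\gamma_i]$, the task is to show that these two sums depend only on $\phi$ and not on the factorization. The plan is to read them off from the action of $\phi$ on curves in $P$ through a homological invariant, as in Plamenevskaya--Van Horn-Morris. Concretely, for $1\le i<j\le d-1$ we choose a properly embedded arc $c$ in $P$ that runs from $\partial_i P$ to $\partial_j P$, and for each $j$ an arc from $\partial_0 P$ to $\partial_j P$. We then compare $\phi(c)$ with $c$ using the relative intersection pairing. For a right-handed Dehn twist we have $\tau_\gamma(c)-c=\langle c,\gamma\rangle[\gamma]$ in $H_1(P,\partial P)$. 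The surface is planar, so every vanishing cycle $\gamma$ with class $e_J$ separates $P$. Consequently $\langle c,\gamma\rangle$ is determined by $J$: it is $\pm1$ exactly when the endpoints of $c$ lie on opposite sides of $\gamma$, that is, when exactly one of $i,j$ belongs to $J$, with the convention that the outer boundary $0$ never belongs to $J$. It is $0$ otherwise.

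The key device avoids the ordering problem that comes with composing twists. We use the standard quantity for planar pages, the algebraic intersection of $c$ with $\phi(c)$ relative to the boundary, computed after pushing endpoints in the direction given by the boundary orientation. Equivalently, we use the variation map $\operatorname{Var}_\phi: H_1(P,\partial P)\to H_1(P)$, $[c]\mapsto[\phi(c)-c]$, which depends only on $\phi$. For a product of twists one gets $\operatorname{Var}_{\tau_{\gamma_1}\cdots\tau_{\gamma_m}}=\sum_i \langle\cdot,\gamma_i\rangle[\gamma_i]$ plus correction terms involving the products $\langle\gamma_i,\gamma_k\rangle$. Here the fact that $P$ is planar does the work: the intersection form on $H_1(P)$ vanishes identically, because any two closed curves in a planar surface have algebraic intersection $0$. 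So all the correction terms vanish, and we get
\[
\operatorname{Var}_\phi(c)=\sum_{i=1}^m \langle c,\gamma_i\rangle\,[\gamma_i].
\]
I expect this step to be the main obstacle. One has to check carefully that the cross terms really do cancel for a composition of twists, which follows by induction on $m$ from $\tau_\gamma(x)=x+\langle x,\gamma\rangle\gamma$ and $\langle\gamma_k,\gamma_i\rangle=0$. One also has to fix sign conventions so that $\langle c,\gamma_i\rangle$ is exactly the indicator described above.

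Now take $c=c_{0j}$, the arc from $\partial_0P$ to $\partial_jP$. Pairing $\operatorname{Var}_\phi(c_{0j})$ with the arc $c_{0k}$ through the duality $H_1(P)\times H_1(P,\partial P)\to\Z$ gives
\[
\sum_i [j\in J_i][k\in J_i]=(VV^T)_{jk}
\]
up to sign. So every entry of $VV^T$, including the diagonal entries, is determined by $\phi$. For the linear term, note that the diagonal entry is $(VV^T)_{jj}=\#\{i: j\in J_i\}=(V\mathbf 1_m)_j$, because the coordinates of the $[\gamma_i]$ are binary. This already proves that $V\mathbf 1_m$ is an invariant as well. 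As an alternative, one can apply abelianization of the boundary twists, or use that $\phi$ determines its image under $\Mod(P\operatorname{rel}\partial P)\to H_1$ of the capped surface, but the diagonal argument is enough.
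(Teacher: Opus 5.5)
Your proof is correct, and it takes a different route from the paper's.

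\textbf{The paper's route.} It assigns $\mu(\tau_\gamma)=(e_J,e_Je_J^T)$ to each Dehn twist and checks that $\mu$ respects the Gervais--Luo relations, the lantern relation being the only substantive one. Then $\mu$ descends to a homomorphism from $\Mod(P\operatorname{rel}\partial P)$ to an abelian group. This takes a presentation of the mapping class group as a black box.

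\textbf{Your route.} You exhibit an invariant defined directly from $\phi$, the variation $\operatorname{Var}_\phi:H_1(P,\partial P)\to H_1(P)$. You identify it, via Picard--Lefschetz, with $VV^T$ regarded as the map $H^1(P)\cong H_1(P,\partial P)\to H_1(P)$, $u\mapsto\sum_i u([\gamma_i])[\gamma_i]$.
\begin{itemize}
\item The key input is that $H_1(P)\to H_1(P,\partial P)$ is zero for planar $P$, since $H_1(P)$ is spanned by boundary classes. This is equivalent to your vanishing of the intersection form. It gives $[g(c)]=[c]$ in $H_1(P,\partial P)$, hence $\operatorname{Var}_{fg}=\operatorname{Var}_f+\operatorname{Var}_g$.
\item The same fact means your first identity $\tau_\gamma(c)-c=\langle c,\gamma\rangle[\gamma]$ must be read in $H_1(P)$. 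In $H_1(P,\partial P)$ both sides vanish.
\item Your worry about signs resolves itself. Since $\langle c_{0j},e_J\rangle=\sum_{k\in J}\langle c_{0j},e_k\rangle$, the sign is global once the binary orientation convention is fixed.
\item Your diagonal observation $(VV^T)_{jj}=(V\mathbf 1_m)_j$ also explains the paper's lantern check. Its linear half is just the diagonal of its quadratic half, so the linear invariant carries no independent information.
\end{itemize}

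\textbf{Comparison.} Your argument is presentation-free and gives the invariant an intrinsic meaning. $VV^T$ is the variation operator of the monodromy, readable from the action of $\phi$ on arcs with no factorization in hand. The paper's argument is shorter given Gervais--Luo. It stays in the combinatorial language of hole and joint-hole multiplicities, with the lantern relation visible as the basic move the invariants must respect. That is the viewpoint it uses for the $d=4$ discussion at the end of Section~3.
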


\begin{proof}
For any $\gamma\subset P$, take the set $J$ of holes it encloses, and put 
\[
\mu(\tau_\gamma)=(e_J,\,e_Je_J^T)
\in\mathbb Z^{d-1}\oplus\operatorname{Sym}_{d-1}(\mathbb Z).
\]
To show that $\mu$ respects the Gervais--Luo relations for Dehn twists, 
we check the lantern relation on each $3$-holed disk region, since the others are easy. 
We label its four complementary regions so that the $0$-th contains $\partial_0 P$, and let $J_1,J_2,J_3$ be
the sets of holes lying in the other three ($J_1\sqcup J_2\sqcup J_3\subset \{1,\dots,d-1\}$). 
The four boundary curves of this region then have classes
\[
e_{J_1}+e_{J_2}+e_{J_3},\quad e_{J_1},\quad e_{J_2},\quad e_{J_3},
\]
and the three interior curves appearing in the relation have classes
\[
e_{J_1}+e_{J_2},\quad e_{J_1}+e_{J_3},\quad e_{J_2}+e_{J_3}.
\]
The linear terms agree because
\[
\sum_{k=1}^3e_{J_k}+\sum_{k=1}^3e_{J_k}
=\sum_{1\le k<l\le3}(e_{J_k}+e_{J_l}),
\]
and the quadratic terms agree because
\[
\sum_{k=1}^3e_{J_k}e_{J_k}^T
+\Bigl(\sum_{k=1}^3e_{J_k}\Bigr)\Bigl(\sum_{k=1}^3e_{J_k}\Bigr)^T
=\sum_{1\le k<l\le3}(e_{J_k}+e_{J_l})(e_{J_k}+e_{J_l})^T.
\]
Thus $\mu$ descends to $\Mod(P\operatorname{rel}\partial P)$ and satisfies
\[
\mu(\phi)=\sum_{j=1}^m\mu(\tau_{\gamma_j})=(V\mathbf1_m,\,VV^T).
\]
This proves that both quantities depend only on $\phi$.
\end{proof}

For a cohomology class $u\in H^1(P;\mathbb Z)$, the linear multiplicity
\[
L_\phi(u)=u([\gamma_1])+\cdots+u([\gamma_m])=(V\mathbf{1}_m)^Tu
\]
and the quadratic multiplicity
\[
 Q_\phi(u)=u([\gamma_1])^2+\cdots+u([\gamma_m])^2=u^T(VV^T)u
\]
depend only on $\phi$ and $u$. If $u$ is a trivialization of a positive factorization of $\phi$, we have $L_\phi(u)=Q_\phi(u)=m$. 
 
\begin{proposition}[CY length minimality]\label{prop:cy-minimality}
Suppose that a positive allowable factorization $\phi=\tau_{\gamma_1}\cdots \tau_{\gamma_m}$ 
admits a trivialization $u$. Then any 
other positive factorization $\phi=\tau_{\gamma'_1}\cdots \tau_{\gamma'_{m'}}$ satisfies $m'\geq m$. 
Equality holds if and only if $u$ is also a trivialization of $\tau_{\gamma'_1}\cdots \tau_{\gamma'_{m'}}$.
Consequently every CY factorization of $\phi$ has the same, minimal, length.
\end{proposition}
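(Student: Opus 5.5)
The plan is to compare the two factorizations through the invariants of Lemma~\ref{lem:planar-multiplicities}, evaluated at the fixed class $u$. Write $x_i=u([\gamma'_i])\in\Z$ for $i=1,\dots,m'$. Since $L_\phi(u)$ and $Q_\phi(u)$ depend only on $\phi$ and $u$, computing them with the trivialized factorization gives $L_\phi(u)=Q_\phi(u)=m$. Computing them with the primed factorization gives
\[
 \sum_{i=1}^{m'} x_i = m,\qquad \sum_{i=1}^{m'} x_i^2 = m .
\]

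The key step is then an elementary integrality inequality. Because each $x_i$ is an integer, $x_i^2\ge x_i$, with equality exactly when $x_i\in\{0,1\}$. Consider the sum
\[
 \sum_{i=1}^{m'} x_i(x_i-1)=Q_\phi(u)-L_\phi(u)=0 .
\]
Every term is nonnegative, so every $x_i$ lies in $\{0,1\}$. It follows that $m=\sum x_i$ equals the number of indices with $x_i=1$, and in particular $m\le m'$. Equality $m'=m$ holds if and only if every $x_i=1$, which is exactly the statement that $u$ trivializes the primed factorization. The converse direction of the equality clause is immediate: if $u$ is a trivialization of the primed factorization, then $m'=L_\phi(u)=m$.

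For the last sentence, let $\phi$ have two CY factorizations. Proposition~\ref{prop:integral-c1} supplies a trivialization for each of them, so applying the inequality in both directions gives equal lengths. Since the first factorization is compared with arbitrary positive factorizations, this common length is the minimum.

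I expect no serious obstacle. The only point to check is that Lemma~\ref{lem:planar-multiplicities} applies to an arbitrary positive factorization $\phi=\tau_{\gamma'_1}\cdots\tau_{\gamma'_{m'}}$, not only to allowable ones. This holds because $\mu$ is defined on all Dehn twists. A null-homologous $\gamma'_i$ would contribute $e_J=0$, hence $x_i=0$, and this is consistent with the argument above. The orientation convention (binary coordinates) is also not an issue, since $\mu(\tau_\gamma)$ depends only on the enclosed hole set $J$.
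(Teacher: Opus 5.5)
Your proof is correct, but its key step differs from the paper's. The paper applies Cauchy--Schwarz, $L_\phi(u)^2\le m'\,Q_\phi(u)$. With $L_\phi(u)=Q_\phi(u)=m$ this gives $m'\ge m$ once the case $m=0$ is set aside. For equality, the paper forces all $u([\gamma'_i])$ to equal a constant $c$ with $cm'=m=c^2m'$, so $c=1$. You use integrality instead: $\sum_i x_i(x_i-1)=Q_\phi(u)-L_\phi(u)=0$, and every term is a nonnegative integer. Your route needs no separate case $m=0$ and yields more. It shows that $u$ takes only the values $0$ and $1$ on the vanishing cycles of every positive factorization of $\phi$, and that $m'-m$ is exactly the number of $\gamma'_i$ with $u([\gamma'_i])=0$. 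The paper's route, in turn, never uses that $u$ is integral. It therefore gives the same length bound for any real solution of $V^Tu=\mathbf1_m$. For instance, the six curves in the integrality remark admit $u=\frac12(1,1,1,1,0)$, so their product has no positive factorization of length less than six even though its $c_1$ is nonzero. Your inequality $x^2\ge x$ fails at $x=\tfrac12$, so your argument does not reach that case.
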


\begin{proof}
If $m=0$, the inequality is immediate, and $u$ can trivialize the second
factorization only when $m'=L_\phi(u)=0$. Assume henceforth that $m>0$.
For $\phi=\tau_{\gamma'_1}\cdots \tau_{\gamma'_{m'}}$, the Cauchy--Schwarz inequality gives
\[
 L_\phi(u)^2
 =\left(
u([\gamma'_1])+\cdots+u([\gamma'_{m'}])
 \right)^2
 \le m' Q_\phi(u).
\]
Since $L_\phi(u)=Q_\phi(u)=m>0$, this gives $m'\ge m$.
Equality holds exactly when $u([\gamma'_1])=\cdots=u([\gamma'_{m'}])=c$ (a constant). The equalities for $L_\phi$ and $Q_\phi$ then give 
$cm'=m=c^2m'$, so $c=1$. This implies that $u$ is also a trivialization of $\phi=\tau_{\gamma'_1}\cdots \tau_{\gamma'_{m'}}$.
\end{proof}

For Stein fillings, $\operatorname{rank}V=\operatorname{rank}VV^T$
is independent of the positive factorization. The PALF handle complex
gives
\[
 b_1(W_{\gamma_1,\dots,\gamma_m})=(d-1)-\operatorname{rank}V,
 \quad
 b_2(W_{\gamma_1,\dots,\gamma_m})=m-\operatorname{rank}V
\]
and $\chi(W_{\gamma_1,\dots,\gamma_m})=2-d+m$.

\begin{corollary}[CY minimality]\label{cor:cy-filling-minimality}
Let $(Y,\xi)$ be a contact $3$-manifold supported by a positive allowable planar openbook $(P,\phi)$, and suppose one Stein filling $W_0$ arising from a positive factorization of $\phi$ is CY. Then every Stein filling $W$ of $(Y,\xi)$ satisfies
\[
b_1(W)=b_1(W_0),\quad b_2(W)\ge b_2(W_0)\quad (\Rightarrow \chi(W)\ge\chi(W_0)).
\]
If equality holds, then $W$ is also CY. 
\end{corollary}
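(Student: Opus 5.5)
The plan is to reduce every Stein filling of $(Y,\xi)$ to a filling $W_{\gamma'_1,\dots,\gamma'_{m'}}$ coming from a positive factorization of the same $\phi$ on the same page $P$. Once that is done, the corollary follows from Lemma~\ref{lem:planar-multiplicities}, Proposition~\ref{prop:cy-minimality} and Proposition~\ref{prop:integral-c1}.

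The reduction rests on Wendl's theorem for planar contact manifolds: every strong (in particular every Stein) filling of a contact manifold supported by a planar openbook $(P,\phi)$ is, up to deformation and blow-up, the total space of a Lefschetz fibration over $D^2$ with fiber $P$ whose boundary openbook is $(P,\phi)$. So $W$ comes from a positive factorization $\phi=\tau_{\gamma'_1}\cdots\tau_{\gamma'_{m'}}$. A Stein filling contains no exceptional spheres, so no vanishing cycle can bound a disk in $P$. On a planar surface a simple closed curve is null-homologous exactly when it bounds a disk, so every $[\gamma'_i]\ne0$. Hence the factorization is allowable and $W\cong W_{\gamma'_1,\dots,\gamma'_{m'}}$. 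I expect this step to be the main obstacle, since it is the only non-formal input; everything after it is bookkeeping. Concretely, it requires quoting Wendl's result in the form that the page and monodromy class are preserved rel boundary, and ruling out blow-ups.

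With $W=W_{\gamma'}$ and $W_0=W_{\gamma}$, let $V'$ and $V$ be the respective vanishing-cycle matrices. Lemma~\ref{lem:planar-multiplicities} gives $V'V'^T=VV^T$, so $\operatorname{rank}V'=\operatorname{rank}V'V'^T=\operatorname{rank}VV^T=\operatorname{rank}V$. Since $b_1=(d-1)-\operatorname{rank}V$, this gives $b_1(W)=b_1(W_0)$.

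Next, $W_0$ is CY, so by Proposition~\ref{prop:integral-c1} its factorization admits a trivialization $u$. Proposition~\ref{prop:cy-minimality} then gives $m'\ge m$, hence
\[
b_2(W)=m'-\operatorname{rank}V\ \ge\ m-\operatorname{rank}V=b_2(W_0),
\]
and $\chi(W)=2-d+m'\ge 2-d+m=\chi(W_0)$. If equality holds, either in $b_2$ or in $\chi$, then $m'=m$. The equality clause of Proposition~\ref{prop:cy-minimality} then says that $u$ also trivializes $\tau_{\gamma'_1}\cdots\tau_{\gamma'_{m'}}$. Applying Proposition~\ref{prop:integral-c1} again, $W$ is CY.
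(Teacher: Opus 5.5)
Your proposal is correct and follows the paper's own argument. Wendl's theorem, with the minimality of Stein fillings ruling out both blow-ups and null-homologous vanishing cycles, reduces $W$ to a positive allowable factorization of $\phi$; Lemma~\ref{lem:planar-multiplicities}, Proposition~\ref{prop:cy-minimality} and Proposition~\ref{prop:integral-c1} then give the $b_1$ equality, the inequalities and the CY conclusion in the equality case, and you have simply written out the allowability and rank steps that the paper leaves implicit.
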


\begin{proof}
A Stein filling is exact and hence symplectically minimal. Wendl's
theorem deforms its symplectic structure to one supported by a positive
allowable Lefschetz fibration compatible with $(P,\phi)$
\cite{Wendl2010}. The preceding formulas and
Proposition~\ref{prop:cy-minimality} give the inequalities.
In the equality case, that proposition gives a CY factorization.
\end{proof}

In particular, a CY factorization admits no length-decreasing positive substitution, including rational-blowdowns and star-surgery substitutions \cite{EndoMarkVHM2011,Starkston2016}, as long as staying within the planar class.

When $d\leq 3$, Wendl's result implies the uniqueness of Stein filling and it is CY unless three different boundary twists are used simultaneously. However when $d\geq 6$, this is no longer the case. Baykur constructs a fixed monodromy
$\phi\in\Mod(\Sigma_{0,6},\partial\Sigma_{0,6})$
with an infinite sequence of length-five positive factorizations whose Stein fillings
are pairwise non-homeomorphic \cite[Theorem~1 and its proof]{Baykur2026}.
Using this, we have

\begin{corollary}[Infinitely many CY fillings]
\label{cor:baykur-cy}
There exists a contact $3$-manifold supported by a planar openbook with
page $\Sigma_{0,6}$ that admits infinitely many pairwise non-homeomorphic
CY Stein fillings.
\end{corollary}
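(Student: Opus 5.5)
The plan is to take Baykur's monodromy $\phi\in\Mod(\Sigma_{0,6},\partial\Sigma_{0,6})$ from \cite[Theorem~1]{Baykur2026} together with his infinite family of length-five positive allowable factorizations $\phi=\tau_{\gamma^{(k)}_1}\cdots\tau_{\gamma^{(k)}_5}$ ($k\in\Z_{\ge0}$), whose Stein fillings $W_k$ are pairwise non-homeomorphic. All of them fill the same contact $3$-manifold $(Y,\xi)$ supported by $(\Sigma_{0,6},\phi)$. It then suffices to show that every $W_k$ is CY. By Proposition~\ref{prop:integral-c1}, this amounts to exhibiting an integral class $u\in H^1(\Sigma_{0,6};\Z)$ with $u([\gamma^{(k)}_i])=1$ for all $i$ and $k$.

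Proposition~\ref{prop:cy-minimality} reduces this to a single factorization. All factorizations in the family have the same length $m=5$. If $u$ trivializes one of them, say $k=0$, then the equality case of that proposition applies: $u$ automatically trivializes every other positive factorization of $\phi$ of length five, hence every member of the family. So the first step is to write down the five vanishing-cycle classes of one explicit member of Baykur's family in the winding-number basis, as binary vectors $e_J$ with $J\subset\{1,\dots,5\}$. I would choose the member obtained before any of his twisting or conjugation operations, which is built from the generalized lantern relation \cite[Lemma~3]{Baykur2026}.

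The second step is to solve $V^Tu=\mathbf 1_5$ over $\Z$ for that $5\times5$ binary matrix. Alternatively, I would guess $u$ directly by reading off a set of holes that each vanishing cycle meets exactly once in the sense of the $u$-weights. A natural first try is $u=e_j^*$ for a hole $j$ enclosed by all five curves, or a sum $\sum_{j\in S}e_j^*$ where $S$ hits each support $J_i$ in exactly one element.

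The main obstacle is exactly this integrality. The preceding remark shows that the six-curve side of Baykur's lantern relation has $c_1$ of order two, so the half-integral pathology lives nearby, and the rational solution might not be integral. If the first candidate member has only a half-integral solution, I would instead use the five-curve side of the relation, or another member of the family, and check its matrix. Since every member has the same length, any one integral trivialization propagates to all of them by Proposition~\ref{prop:cy-minimality}. By Lemma~\ref{lem:planar-multiplicities}, one can sanity-check the candidate $u$ against the invariants of $\phi$: it must satisfy $L_\phi(u)=Q_\phi(u)=5$. Once such a $u$ is found, the $W_k$ form infinitely many pairwise non-homeomorphic CY Stein fillings of $(Y,\xi)$.
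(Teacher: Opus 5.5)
Your route is the paper's: find an integral trivialization for Baykur's initial length-five factorization, then use the equality case of Proposition~\ref{prop:cy-minimality} to pass it to every member of the family. As written, however, the proposal does not prove the statement. The one substantive input, that such an integral $u$ exists, is left as a to-do, and you explicitly allow that it might fail. The missing fact is short. The initial factorization has vanishing-cycle classes
\[
e_{13},\quad e_{23},\quad e_{245},\quad e_{135},\quad e_{14},
\]
and the class $u=(0,0,1,1,0)$, i.e.\ $e_3^*+e_4^*$, takes the value $1$ on each of them. This is your hitting-set heuristic with $S=\{3,4\}$, since $S$ meets every support in exactly one hole. A single hole cannot work here, because no hole lies in all five supports. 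With this $u$, Proposition~\ref{prop:integral-c1} makes the initial filling CY, and your propagation argument then finishes the proof.

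Your fallback plan also cannot help. By the equality case of Proposition~\ref{prop:cy-minimality}, if any length-five positive factorization of $\phi$ were CY, its trivialization would trivialize every other length-five positive factorization of $\phi$, including the initial one. So within Baykur's family being CY is all-or-nothing. If the integrality check failed on the first member, switching to another member could not rescue it, and the corollary would be false for this family. A factorization of some other monodromy would not fill $(Y,\xi)$ at all. The computation above is therefore not optional: it is the whole content of the corollary beyond citing Baykur.
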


\begin{proof}
The initial factorization of Baykur has the vanishing-cycle classes $e_{13}$, $e_{23}$, $e_{245}$, $e_{135}$, $e_{14}$, which are evaluated as $1$ by $u=(0,0,1,1,0)$. Thus it is CY. 
Since all subsequent factorizations have the same length five, 
Proposition~\ref{prop:cy-minimality} implies that they are CY.
\end{proof}

When $d=4$, one CY Stein filling forces every Stein filling to be CY. In fact, the possible vanishing-cycle classes are $e_1,e_2,e_3,e_{12},e_{13},e_{23},e_{123}$. A trivialization cannot take value $1$ simultaneously on $e_{12},e_{13},e_{23}$, so one of their multiplicities vanishes. For any competing factorization, invariance of $V\mathbf1_m$ and $VV^T$ shows that the multiplicities can only change by the lantern relation, which adds $k$ factors of each of $e_1,e_2,e_3,e_{123}$, removes $k$ factors of each of $e_{12},e_{13},e_{23}$, and increases the length by $k$. CY length minimality and non-negativity then force $k=0$, and the equality case gives the CY condition.
For integral homology spheres, Oba proved the stronger conclusion that the Stein filling is unique up to diffeomorphism and is either $D^4$ or a Mazur-type manifold \cite{Oba2016}.

\section{Subopenbooks and quantum sectors}\label{sec:subopenbooks}

We return to the settings defined in Section~\ref{sec:branched-covers}.

\begin{definition}[$p$-subopenbook]\label{def:admissible-arc-system}
A finite nonempty family $\mathcal A$ of pairwise disjoint properly embedded
arcs in $D^2\setminus \Delta$ is \emph{$(p,\beta)$-admissible} if
\begin{enumerate}[label=\textup{(\roman*)}]
\item for each component $C$ of the closure of
$D^2\setminus\mathcal A$ and each connected component
$Q$ of its preimage $p^{-1}(C)$, the restriction $q_Q$ of $p$ to $Q$ is of degree $d_Q$ equal to the number of boundary components of $Q$;
\item $\beta$ has a representative $f$ which is the identity on a neighborhood of $\mathcal A$
implying that the lift $\widetilde{f}$ restricts to $\phi_Q\in\Mod(Q\operatorname{rel}\partial Q)$, which is 
the lift of the component braid $\beta_C$ represented by $f|_C$. 
\end{enumerate}
Then, we call $(Q,\phi_Q)$ a $p$-\emph{subopenbook}. 
\end{definition}

The surface $Q$ is planar and admissibility gives $\chi(Q)=2-d_Q$, so $Q\cong\Sigma_{0,d_Q}$ implying that 
$q_Q$ satisfies the assumption of Proposition~\ref{prop:paired-cover}. 

\begin{proposition}[Connected-sum decomposition]
\label{prop:connected-sum}
The decomposition along an admissible arc system expresses the ambient openbook 
as an iterated boundary connected sum, and the contact manifold supported by the openbook as an iterated contact connected sum. Namely, 
\[
 (P,\phi)\cong\natural_Q(Q,\phi_Q),
\qquad
 (Y_\phi,\xi_\phi)\cong\#_Q(Y_{\phi_Q},\xi_{\phi_Q}).
\]
\end{proposition}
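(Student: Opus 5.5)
We would prove the statement by induction on the number of arcs in $\mathcal A$, so the core is the case of a single admissible arc $a$ cutting $D^2$ into two disks $C_1, C_2$.

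\textbf{Preimage of the arc.} First we analyze $p^{-1}(a)$. Since $a$ avoids $\Delta$, it has $d$ disjoint lifts, one in each sheet. Condition (i) says that each component $Q$ of $p^{-1}(C_i)$ is a degree-$d_Q$ cover with $d_Q$ boundary components. Hence $Q$ meets $p^{-1}(\partial D^2)$ in a single circle. The remaining boundary of $Q$ consists of closed curves formed by lifts of $a$ and lifts of arcs of $\partial D^2$.

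A Riemann--Hurwitz count helps here. Let $n_i$ be the number of components over $C_i$, and let $\delta_Q$ be the number of branch points in $C_i$. Then
\[
\chi(Q)=d_Q-\delta_Q=2-d_Q .
\]
Summing over all pieces and comparing with $\chi(P)=2-d$ forces the gluing along the $d$ lifts of $a$ to be as follows. Every lift of $a$ joins two distinct pieces, and the dual graph (pieces as vertices, lifts of $a$ as edges) is a tree. Each gluing of two planar surfaces along a single boundary arc is a boundary connected sum. A tree of such gluings is therefore an iterated boundary connected sum, and $P\cong\natural_Q Q$.

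\textbf{Monodromy.} Next we treat the monodromy. By condition (ii), $f$ is the identity near $\mathcal A$. So $\widetilde f$ is the identity near $p^{-1}(\mathcal A)$, which consists of the connected-sum bands. Hence $\widetilde f$ is isotopic rel boundary to the union of the restrictions $\phi_Q$ extended by the identity over the bands. This is exactly the boundary connected sum of the monodromies, giving $(P,\phi)\cong\natural_Q(Q,\phi_Q)$.

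\textbf{Contact manifolds.} For the contact statement we invoke the standard fact (Torisu, or Gay's Murasugi-sum argument) that the boundary connected sum of abstract openbooks supports the contact connected sum of the supported contact manifolds. Iterating along the tree gives
\[
(Y_\phi,\xi_\phi)\cong\#_Q(Y_{\phi_Q},\xi_{\phi_Q}).
\]

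\textbf{Induction.} For several arcs, we cut one arc at a time. Condition (i) for the full system implies it for the coarser system after any order of cutting, since components of coarser pieces are unions of finer ones glued along arc lifts and the Euler count above is additive. Alternatively, we run the Euler-characteristic/tree argument directly with all of $p^{-1}(\mathcal A)$ at once.

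\textbf{Main obstacle.} The main obstacle is the combinatorial step showing that the dual graph is a tree with each arc lift joining distinct pieces. The alternative is a lift forming a non-separating band, which would raise genus or create extra boundary. We would handle it with the Euler-characteristic count together with the fact that $P$ is planar and connected. Connectivity plus the Euler count gives exactly $(\#\text{pieces})-1$ effective edges. Planarity rules out self-gluings of a piece, since these would create genus or change the boundary count, contradicting condition (i) that each piece's boundary count equals its degree.
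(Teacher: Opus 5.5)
Your core argument is the paper's: the Euler count $\chi(P)+cd=\sum_Q\chi(Q)=2v-(c+1)d$ gives $v=cd+1$, so the connected dual graph is a tree. You can do all $c$ arcs at once, as the paper does, rather than induct. Since $\widetilde f$ is the identity near the lifted arcs, the pieces then glue as boundary connected sums of openbooks, and hence as contact connected sums.

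Two side remarks in your proposal are wrong, though neither is load-bearing:
\begin{enumerate}[label=\textup{(\arabic*)}]
\item Condition (i) makes each boundary circle of $Q$ map homeomorphically onto $\partial C_i$. So $Q$ meets $p^{-1}(\partial D^2)$ in $d_Q$ disjoint lifts of the arc $C_i\cap\partial D^2$, not in a single circle.
\item Self-gluings need no planarity argument, and planarity would not exclude a band with both feet on one boundary circle anyway. They are impossible because the two sides of each lifted arc lie over different components of $D^2\setminus\mathcal A$. Multiple edges and longer cycles are what the Euler count rules out.
\end{enumerate}
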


\begin{proof}
Put $c=|\mathcal A|$ and take the dual graph $G$ of the lifted
decomposition, with $v$ vertices and $cd$ edges. Since admissibility gives
$\chi(Q)=2-d_Q$ and the degrees over each of the $c+1$ disks sum to $d$,
\[
 \chi(P)+cd=\sum_Q\chi(Q)=2v-(c+1)d,
\]
so $v=cd+1$. Thus, the connectivity implies that $G$ is a tree. Since the monodromy is the
identity near every lifted edge, we can glue the parts along $G$ to obtain $(P,\phi)\cong\natural_Q(Q,\phi_Q)$ and 
$(Y_\phi,\xi_\phi)\cong\#_Q(Y_{\phi_Q},\xi_{\phi_Q})$.
\end{proof}

\begin{theorem}[Localization of positive factorization]\label{thm:positive-localization}
Let $\mathcal A$ be $(p,\beta)$-admissible and let
$\lambda=(\tau_{\gamma_1},\ldots,\tau_{\gamma_m})$ be any positive
Dehn-twist factorization of $\phi$. Then representatives of the vanishing
cycles may be chosen with $\gamma_i\cap p^{-1}(\mathcal A)=\varnothing$ for
all $i$. Consequently every $\gamma_i$ lies in a single $p$-subopenbook $Q$, and
$\lambda$ is, up to commuting factors supported in different components, a
shuffle of positive factorizations $\lambda_Q$ of the restricted monodromies
$\phi_Q$.
\end{theorem}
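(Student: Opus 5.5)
The plan is to argue arc by arc, using the right-veering property of positive Dehn twists. The factorization-independent invariants of Lemma~\ref{lem:planar-multiplicities} will serve as a consistency check.

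\textbf{Step 1 (fixed arcs).} Lift $\mathcal A$: $p^{-1}(\mathcal A)$ is a disjoint union of properly embedded arcs $\widetilde\alpha\subset P$. By Proposition~\ref{prop:connected-sum} these are the edges of the tree $G$, and each one is essential and separating. By admissibility~(ii), the representative $\widetilde f$ is the identity near $p^{-1}(\mathcal A)$. So $\phi(\widetilde\alpha)=\widetilde\alpha$ rel endpoints for every lifted arc.

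\textbf{Step 2 (no vanishing cycle crosses a fixed arc).} Fix one lifted arc $\widetilde\alpha$. Isotope all $\gamma_i$ so that each is in minimal position with $\widetilde\alpha$. I would invoke the Honda--Kazez--Mati\'c monotonicity for right-veering maps. If $\gamma$ meets $\alpha$ essentially, then $\tau_\gamma(\alpha)$ is strictly to the right of $\alpha$ at its endpoints. Moreover, if $\psi$ is a product of positive twists, then $\psi(\alpha)\ge\alpha$, and $\psi\tau_\gamma(\alpha)>\alpha$ whenever $\tau_\gamma(\alpha)>\alpha$.

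To apply this, let $i$ be the largest index with $\gamma_i\cap\widetilde\alpha\neq\varnothing$ essentially. Then $\tau_{\gamma_{i+1}}\cdots\tau_{\gamma_m}$ fixes $\widetilde\alpha$ up to isotopy, because those curves can be isotoped off it. Applying $\tau_{\gamma_i}$ moves the arc strictly right, and the remaining positive twists $\tau_{\gamma_1}\cdots\tau_{\gamma_{i-1}}$ cannot bring it back. Hence $\phi(\widetilde\alpha)>\widetilde\alpha$, which contradicts Step~1. So every $\gamma_i$ is disjoint from $\widetilde\alpha$ after isotopy.

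Doing this for all arcs at once needs one extra observation. The lifted arcs are pairwise disjoint, and minimal position with respect to their union can be achieved simultaneously (bigon removal). This gives representatives with $\gamma_i\cap p^{-1}(\mathcal A)=\varnothing$.

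As a sanity check, the cross entries of $VV^T$ between the hole sets on the two sides of $\widetilde\alpha$ should vanish. Indeed, $\mu$ is additive on all of $\Mod(P\operatorname{rel}\partial P)$ and $\phi=\prod_Q\phi_Q$. Each entry is a sum of non-negative terms, so no hole set $J$ straddles $\widetilde\alpha$. This is only a homological shadow of Step~2; the geometric statement still requires the right-veering argument.

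\textbf{Step 3 (sorting into pieces).} Each $\gamma_i$ now lies in the complement of $p^{-1}(\mathcal A)$, which is $\bigsqcup_Q Q$. Being connected, $\gamma_i$ lies in a single $Q$, and allowability makes it essential there. Twists supported in different $Q$ commute. Using these commutations I would move factors to group them by piece. This gives $\phi=\prod_Q\lambda_Q$, where $\lambda_Q$ is the subword of twists in $Q$ in their original order, so $\lambda$ is a shuffle of the $\lambda_Q$. Finally, each inclusion $Q\hookrightarrow P$ induces an injection $\Mod(Q\operatorname{rel}\partial Q)\to\Mod(P\operatorname{rel}\partial P)$, since no complementary component is a disk or an annulus. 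Both $\prod_Q\lambda_Q$ and $\prod_Q\phi_Q$ lie in the image of $\prod_Q\Mod(Q\operatorname{rel}\partial Q)$, which is a direct product. Therefore $\lambda_Q=\phi_Q$ for each $Q$.

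The main obstacle I expect is Step~2, specifically the strictness claim: a positive twist on a curve essentially meeting $\alpha$ moves $\alpha$ strictly right, and this strict inequality survives composition with further positive twists. I would first try to quote the HKM lemma that the right-veering relation is a partial order preserved under composition of right-veering maps. If that citation is inconvenient, I would verify strictness directly by comparing initial tangent directions at the endpoint of $\widetilde\alpha$ on $\partial P$.
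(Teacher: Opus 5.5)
Your proposal is correct and follows the paper's route. The paper's proof cites Baykur--Monden--Van Horn-Morris [Proposition~3] for the fact that every vanishing cycle of a positive factorization has zero geometric intersection with a proper arc fixed by $\phi$; your Honda--Kazez--Mati\'c right-veering argument in Step~2 reproves exactly this, the paper then sorts the cycles into the pieces as in your Step~3, and your injectivity argument there makes explicit the identification $\lambda_Q=\phi_Q$ that the paper leaves implicit.

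One harmless slip: not every lifted arc is essential. A lift cutting off an unbranched disk piece that is a leaf of the tree $G$ is boundary-parallel, and this already happens in Example~\ref{ex:separating-pairs}; it also means a single $Q$ can have an annular complementary component with one boundary circle on $\partial P$. Such arcs meet no closed curve essentially, and such annuli do not obstruct injectivity, so nothing in your argument changes.
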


\begin{proof}
Every component of $p^{-1}(\mathcal A)$ is a proper arc fixed by $\phi$.
By \cite[Proposition~3]{BaykurMondenVHM2017}, the vanishing cycles of a
positive factorization have zero geometric intersection with each such arc,
and the arcs are pairwise disjoint, so representatives of the cycles can be
chosen disjoint from their union. Each cycle therefore lies in one $Q$,
twists in different pieces commute, and restriction gives a positive
factorization $\lambda_Q$ of $\phi_Q$.
\end{proof}

\begin{corollary}[CY locality]
\label{cor:cy-heredity}
Let $\lambda$ be a positive allowable factorization of $\phi$, localized as a
shuffle of the $\lambda_Q$. 
Then $\lambda$ is CY if and only if every
$\lambda_Q$ is CY, 
and in that case $|\lambda'_Q|\ge|\lambda_Q|$ for every $Q$
and every other positive factorization $\lambda'$ of $\phi$ (we write $\phi=[\lambda]=[\lambda']$). 
\end{corollary}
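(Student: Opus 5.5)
The plan is to work entirely with the trivialization criterion of Proposition~\ref{prop:integral-c1}, using the homology splitting that comes with the boundary connected sum of Proposition~\ref{prop:connected-sum}. Since $P\cong\natural_Q Q$ is glued along arcs following a tree $G$, the inclusions induce an isomorphism
\[
 H_1(P;\Z)\cong\bigoplus_Q H_1(Q;\Z),
\]
and dually $H^1(P;\Z)\cong\bigoplus_Q H^1(Q;\Z)$. Concretely, each inner boundary component of $P$ is an inner boundary component of exactly one piece $Q$. The pieces share only boundary arcs, so no independent cycle is created by the gluing; this is where the tree property enters. By Theorem~\ref{thm:positive-localization}, each vanishing cycle $\gamma_i$ lies in a single $Q$, so $[\gamma_i]$ lies in the summand $H_1(Q;\Z)$. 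Allowability also localizes: $[\gamma_i]\neq0$ in $H_1(P)$ if and only if it is nonzero in $H_1(Q)$, because the summand injects.

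For the equivalence, first suppose $u$ trivializes $\lambda$. Restricting $u$ to each summand gives $u_Q\in H^1(Q;\Z)$ with $u_Q([\gamma_i])=u([\gamma_i])=1$ for every $\gamma_i\subset Q$, so each $\lambda_Q$ is CY. Conversely, given trivializations $u_Q$ of all the $\lambda_Q$, define $u=\sum_Q u_Q$ via the direct sum decomposition. Since $[\gamma_i]$ lies in one summand, $u([\gamma_i])=u_Q([\gamma_i])=1$. Proposition~\ref{prop:integral-c1} then yields the CY statement in both directions. The case of a piece with no vanishing cycle is trivial, since the empty factorization is CY.

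For the minimality clause, let $\lambda'$ be any positive factorization of $\phi$. By Theorem~\ref{thm:positive-localization}, applied with the same admissible system $\mathcal A$, $\lambda'$ localizes as a shuffle of positive factorizations $\lambda'_Q$ of the same restricted monodromies $\phi_Q$; the restriction $\phi_Q$ depends only on $\phi$ and $\mathcal A$. Since $\lambda_Q$ is a CY factorization of $\phi_Q$ on the planar page $Q$, Proposition~\ref{prop:cy-minimality} applied on $Q$ gives $|\lambda'_Q|\ge|\lambda_Q|$.

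The main obstacle is making the direct sum decomposition of $H_1$ rigorous and compatible with the winding-number bases. One must check that a cycle contained in $Q$ has the same class whether computed in $Q$ or in $P$: the holes of $P$ enclosed by $\gamma_i$ should correspond exactly to the holes of $Q$ enclosed by it. One must also check that the outer boundary of each $Q$ introduces no extra relations. I would verify this by Mayer--Vietoris along the lifted arcs: each gluing is along a contractible arc, so $H_1$ is additive. A secondary point is that the pieces $\lambda'_Q$ really factor $\phi_Q$ rather than some conjugate. This follows because the twists supported in distinct pieces commute, and because $\mathrm{Mod}$ of the glued surface restricts to each piece, since $\phi$ is the identity near $p^{-1}(\mathcal A)$.
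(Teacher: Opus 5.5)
Your proposal is correct and follows the paper's proof. The tree gluing gives $H^1(P;\Z)\cong\bigoplus_Q H^1(Q;\Z)$, under which allowability and the equations $u([\gamma_i])=1$ split piece by piece, so Proposition~\ref{prop:integral-c1} gives the equivalence; localizing $\lambda'$ by Theorem~\ref{thm:positive-localization} and applying Proposition~\ref{prop:cy-minimality} to each $\phi_Q$ gives the inequalities.

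One correction to your ``main obstacle'': the winding-number bases need not match hole for hole, since an inner boundary of a piece can enclose several holes of $P$. Nothing requires that match, because the trivialization condition is basis-free, and orienting each $\gamma_i$ counterclockwise in the planar picture of $P$ is also the convention on $Q$ once its outermost boundary component is taken as $\partial_0Q$.
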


\begin{proof}
The boundary connected-sum tree gives
$H^1(P;\Z)\cong\bigoplus_QH^1(Q;\Z)$; in particular each $\lambda_Q$ is
allowable, and under this splitting the equations $u([\gamma_i])=1$ are
precisely the corresponding equations on the pieces, so
Proposition~\ref{prop:integral-c1} gives the CY equivalence.
Theorem~\ref{thm:positive-localization} also localizes $\lambda'$, and
Proposition~\ref{prop:cy-minimality} applied to each $\phi_Q$ gives the
inequalities.
\end{proof}

\begin{example}[Separating pairs]\label{ex:separating-pairs}
Take $\beta=h_{a_1}^{k_1}\cdots h_{a_{d-1}}^{k_{d-1}}$ with all $k_j\ge1$, so that
$\phi=\tau_{\partial_1P}^{k_1}\cdots\tau_{\partial_{d-1}P}^{k_{d-1}}$, and let
$\mathcal A$ separate the $d-1$ pairs from one another. The cut is
admissible: the branched pieces are annuli carrying $\tau^{k_j}$ and the
remaining pieces are unbranched disks. The filling of an annulus with
$\tau^{k}$ is the $A_{k-1}$ Milnor fibre, so
Proposition~\ref{prop:connected-sum} identifies $W$ with the boundary
connected sum of the $A_{k_j-1}$ Milnor fibres and $(Y_\phi,\xi_\phi)$ with
the connected sum of the links $(L(k_j,k_j-1),\xi_{\mathrm{std}})$. The
factorization is CY, with $b_1=0$, $b_2=\sum_j(k_j-1)$ and
$\chi=2-d+\sum_jk_j$.
\end{example}

For the Majorana operators of Section~\ref{sec:branched-covers} with
\[
 \Pi=i^{d-1}\gamma_1\cdots\gamma_{2d-2},\qquad
 \cH^{2^{d-2}}_{\eta}=\ker(\Pi-\eta I)\quad(\eta=\pm1),
\]
we put $\Gamma_{rs}=i\gamma_r\gamma_s$, so that $\Gamma_{rs}^2=I$ and a half-twist with endpoints $r,s$ satisfies
$\rho(h_a^2)=\Gamma_{rs}/\operatorname{U}(1)$ \cite{Bravyi2006,Ahlbrecht2009}. 
A squared half-twist is a pure braid whose quantum image is a nontrivial Pauli
operator. Let $\mathsf P_n\cong\mathbb F_2^{2n}$ be the projective
$n$-qubit Pauli group. The maximal contexts generated by Pauli operators are its Lagrangian subspaces.

\begin{theorem}[Paired cut and sector reduction]
\label{thm:subopenbook-sector}
When $d\ge3$, take $\mathcal{A}$ as a single arc separating the first $2d-4$
branch points from the last pair, provided that a representative of $\beta\in L_p$ is the 
identity near $\mathcal A$. The branched pieces are 
$Q\cong\Sigma_{0,d-1}$ over the first block and an annulus over the last pair. 
Then, for each $\eps\in\{\pm1\}$, there exists an isometry
\[
 E_\eps:=\ker(\Gamma_{2d-3,2d-2}-\eps I)\cong\cH^{2^{d-3}}_{\eta\eps}
\]
intertwining the Ising actions of braids supported on the first $2d-4$ points,
while braids on the last pair act on $E_\eps$ by scalar phases. 
Put $S=[\Gamma_{2d-3,2d-2}]\in \mathsf P_{d-2}$ to identify the projective Pauli group on $E_\eps$ with the symplectic
quotient $S^\perp/\langle S\rangle\cong \mathsf P_{d-3}$, where 
the quotient map restricts to a bijection between the maximal contexts of
$\mathsf P_{d-2}$ containing $S$ and those of $\mathsf P_{d-3}$.
\end{theorem}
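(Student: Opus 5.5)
The plan has three parts: the topological claim about the cut, the isometry with its intertwining property, and the symplectic bookkeeping for the Pauli contexts.

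\emph{Topology of the cut.} The last two branch points both have monodromy $(0,d-1)$, so the disk $C'$ containing them lifts to one annulus (sheets $0$ and $d-1$) together with $d-2$ unbranched disks. The complementary disk $C$ contains $2d-4$ branch points with monodromies $(01),\dots,(0,d-2)$. Its preimage is a connected $\Sigma_{0,d-1}$ on sheets $0,\dots,d-2$, plus one unbranched disk on sheet $d-1$. Hence condition (i) of Definition~\ref{def:admissible-arc-system} holds. Condition (ii) is exactly the hypothesis that $\beta$ is the identity near $\mathcal A$.

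\emph{The isometry $E_\eps\cong\cH^{2^{d-3}}_{\eta\eps}$.} I will work on the ambient parity sector $\cH^{2^{d-2}}_\eta\subset(\C^2)^{\otimes(d-1)}$. In the Jordan--Wigner realization of Section~\ref{sec:branched-covers}, the last pair gives $\Gamma_{2d-3,2d-2}=i\gamma_{2d-3}\gamma_{2d-2}$. This equals $i\cdot XY=-Z$ on the last tensor factor, since the $Z$-strings cancel. Consequently:
\begin{itemize}
\item $\ker(\Gamma_{2d-3,2d-2}-\eps I)=(\C^2)^{\otimes(d-2)}\otimes|v_\eps\rangle$, where $v_\eps$ is the $(-\eps)$-eigenvector of $Z$.
\item The first $2d-4$ Majoranas act only on the first $d-2$ factors. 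There they are the Majorana operators for $d-1$ in place of $d$.
\item The total parity factors as $\Pi=\Pi'\cdot\Gamma_{2d-3,2d-2}$, up to the sign convention from $i^{d-1}=i^{d-2}\cdot i$. So on $E_\eps$ we get $\Pi=\eps\Pi'$, and the constraint $\Pi=\eta$ becomes $\Pi'=\eta\eps$.
\end{itemize}
The map $\psi\mapsto\psi\otimes v_\eps$ is therefore an isometry $\cH^{2^{d-3}}_{\eta\eps}\to E_\eps\cap\cH_\eta$. I will check that the sign bookkeeping in $\Pi$ gives $\eta\eps$ and not $-\eta\eps$; if it gives the latter, I will relabel $\eps$.

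\emph{Intertwining and phases.} For braids on the first $2d-4$ points, the generators $\rho(\sigma_k)=\tfrac1{\sqrt2}(I+\gamma_{k+1}\gamma_k)$ with $k\le 2d-5$ involve only the first $2d-4$ Majoranas. They therefore act as (operator on the first $d-2$ factors)$\otimes I$, which gives the intertwining. The generator $\sigma_{2d-3}$ on the last pair is $\tfrac1{\sqrt2}(I-i\Gamma_{2d-3,2d-2})$, which acts on $E_\eps$ by the scalar $\tfrac1{\sqrt2}(1-i\eps)$. The remaining braid on these strands, $\sigma_{2d-4}$, crosses the cut and is excluded by hypothesis. The step I expect to need the most care is justifying that a $\beta$ fixing $\mathcal A$ factors as a product of a braid supported in $C$ and a braid supported in $C'$. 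The reason is that a braid fixed near an arc splits as the product of its component braids $\beta_C\beta_{C'}$, and each factor is generated by standard generators inside its disk after conjugation by a fixed homeomorphism that places the points in standard position.

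\emph{Pauli part.} $S$ is a nonzero (isotropic) vector in the symplectic space $\mathbb F_2^{2(d-2)}$. The centralizer of $S$ corresponds to $S^\perp$, and the operators acting on $E_\eps$ are exactly the Paulis commuting with $S$, modulo $S$, which acts as a scalar. This gives $S^\perp/\langle S\rangle\cong\mathbb F_2^{2(d-3)}$ with the induced nondegenerate form. For the contexts I will use the standard fact on Lagrangians: if $L$ is Lagrangian with $S\in L$, then $L\subset S^\perp$ and $L/\langle S\rangle$ is Lagrangian of dimension $d-3$. Conversely, a Lagrangian $\bar L$ in the quotient has preimage containing $S$ that is isotropic of dimension $d-2$, hence Lagrangian. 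These two maps are mutually inverse, which gives the bijection.
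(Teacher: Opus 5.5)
Your proposal is correct and follows the paper's proof. The paper factors $\Pi=(i^{d-2}\gamma_1\cdots\gamma_{2d-4})\,\Gamma_{2d-3,2d-2}$, lets first-block braids act through even Majorana products on the first $2d-4$ generators, notes that the last-pair half-twist is a function of $\Gamma_{2d-3,2d-2}$ and hence a scalar on $E_\eps$, and matches Lagrangians through $S$ with Lagrangians of $S^\perp/\langle S\rangle$; your Jordan--Wigner identity $\Gamma_{2d-3,2d-2}=I^{\otimes(d-2)}\otimes(-Z)$ only makes the isometry $\psi\mapsto\psi\otimes v_\eps$ explicit.

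There are two harmless slips. The factorization of $\Pi$ is exact, so no relabeling of $\eps$ is ever needed. Also $\gamma_{2d-2}\gamma_{2d-3}=i\Gamma_{2d-3,2d-2}$, so $\rho(\sigma_{2d-3})=\tfrac1{\sqrt2}(I+i\Gamma_{2d-3,2d-2})$ and the phase on $E_\eps$ is $(1+i\eps)/\sqrt2$, not $(1-i\eps)/\sqrt2$.
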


\begin{proof}
The two blocks have branched sheets $\{0,\ldots,d-2\}$ and $\{0,d-1\}$,
all other components being unbranched disks. From
\[
\Pi=(i^{d-2}\gamma_1\dots\gamma_{2d-4})\Gamma_{2d-3,2d-2},
\] 
even products of Majorana operators on the first $2d-4$ generators 
act on $E_\eps$ as on $\cH^{2^{d-3}}_{\eta\eps}$, and the half-twist on the last pair acts up to phase
as $\exp(\frac{\pi i}{4}\Gamma_{2d-3,2d-2})$ \cite{Bravyi2006}, hence as a scalar on $E_\eps$. The
Pauli operators preserving $E_\eps$ form $S^\perp$, and the kernel of their projective
restriction $\langle S\rangle$ gives the symplectic quotient. Its
Lagrangian subspaces correspond exactly to the Lagrangian subspaces of
$\mathsf P_{d-2}$ containing $S$.
\end{proof}

\begin{remark}[Stabilization] 
Particularly, when the removed pair carries a single positive half-twist, it is a positive Hopf band and restoring it is a planar positive
Hopf stabilization which does not change the contact structure; for a higher
power the cut splits off an $A_{k-1}$ Milnor fibre instead, as in
Example~\ref{ex:separating-pairs}. On the state space the same cut turns
$\Gamma_{2d-3,2d-2}$ from a variable into the constant $\eps$. The same passage
from a variable to a constant occurs in \cite{MoriBayesian2026}, where a
determined context is excluded from later prediction.
\end{remark}

Iterate the above cut until every pair of
branch points is separated from the others. All
$d-1$ branched pieces are then annuli and the sectors are lines. Writing
$L_t$ for the joint eigenline of the paired operators
$\Gamma_{1,2},\ldots,\Gamma_{2d-3,2d-2}$ with eigenvalues $t_1,\ldots,t_{d-1}$, we have 
\[
 \cH^{\,2^{d-2}}_{\eta}=\bigoplus_{t_1\cdots t_{d-1}=\eta}L_t.
\]
Performing only some of these cuts, we can distribute the pairs into $k$ nonempty blocks
$I_1\sqcup\cdots\sqcup I_k$. Its branched piece over a block $I$ is $Q_I\cong\Sigma_{0,|I|+1}$.
We call such a cut, together with an arc pairing of each $Q_I$ in the sense of
Proposition~\ref{prop:paired-cover}, a \emph{paired decomposition}, and identify
two of them when they induce the same partition and the same matching of
$\Delta$. The pairing is already implicit in the operators $\Gamma_{2j-1,2j}$
above, and recording it makes a paired decomposition determine a matching of
$\Delta$, namely the arc pairing of $p$ formed by all the blocks together.

\begin{proposition}[Sectorwise tensor decomposition]
\label{prop:sectorwise-tensor}
Let $\cH^{\,2^{|I|-1}}_{\eps}$ be the parity-$\eps$ state space of $Q_I$.
Then
\[
 \cH^{\,2^{d-2}}_{\eta}\cong
 \bigoplus_{\substack{\eps_I\in\{\pm1\}\\ \prod_I\eps_I=\eta}}
 \bigotimes_{I}\cH^{\,2^{|I|-1}}_{\eps_I},
\]
the summand of $(\eps_I)$ being the sum of the lines $L_t$ with
$\eps_I=\prod_{j\in I}t_j$, and a braid supported away from the cuts preserves
each summand and acts there as the tensor product of the actions of its
component braids.
\end{proposition}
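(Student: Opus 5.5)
We work in the joint eigenbasis of the commuting paired operators $\Gamma_{1,2},\ldots,\Gamma_{2d-3,2d-2}$. Concretely, $\Gamma_{2j-1,2j}=i\gamma_{2j-1}\gamma_{2j}$ is $Z$ on the $j$-th tensor factor of $(\C^2)^{\otimes(d-1)}$, up to sign. So the lines $L_t$ are the computational basis lines, and $\Pi=\prod_j\Gamma_{2j-1,2j}$ up to the normalizing sign fixed in the definition. This gives the displayed line decomposition of $\cH^{2^{d-2}}_\eta$.

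\textbf{Step 1: reindexing.} For a block decomposition $I_1\sqcup\cdots\sqcup I_k$, each tuple $t$ with $\prod_j t_j=\eta$ determines block parities $\eps_I=\prod_{j\in I}t_j$ with $\prod_I\eps_I=\eta$. Grouping the lines $L_t$ by $(\eps_I)$ gives
\[
 \cH^{2^{d-2}}_\eta=\bigoplus_{\prod_I\eps_I=\eta}\ \bigoplus_{t:\ \prod_{j\in I}t_j=\eps_I}L_t .
\]
For fixed $(\eps_I)$ the inner index set is the product over $I$ of the sets $\{t_I\in\{\pm1\}^{I}:\prod t_j=\eps_I\}$. These are exactly the line indices of $\cH^{2^{|I|-1}}_{\eps_I}$ for the Majorana system of $Q_I$. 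Sending $L_t\mapsto\bigotimes_I L_{t_I}$ therefore defines a unitary isomorphism of each summand with $\bigotimes_I\cH^{2^{|I|-1}}_{\eps_I}$. The dimensions check out: $\prod_I 2^{|I|-1}$ summed over the $2^{k-1}$ admissible $(\eps_I)$ equals $2^{d-2}$.

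\textbf{Step 2: choosing phases and Jordan--Wigner strings.} The naive identification above is only on lines, so phases must be fixed so that the Majorana operators of each block match. I would realize the decomposition by reordering the tensor factors so each block is contiguous; the pairs are labelled consecutively, so this is harmless. Then the $Z$-strings in $\gamma_{2j-1},\gamma_{2j}$ for $j\in I$ split into two parts:
\begin{enumerate}[label=\textup{(\alph*)}]
\item a $Z$-string inside block $I$, which is the Majorana operator of $Q_I$;
\item a product of $Z$'s over all factors of the earlier blocks, which equals $\prod_{I'<I}\eps_{I'}$ up to the sign convention and is hence a constant on the summand.
\end{enumerate}
Consequently, every even product of Majoranas supported in block $I$ has no string contribution from other blocks. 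It acts on the $(\eps_I)$-summand as $I\otimes\cdots\otimes(\text{that operator on }Q_I)\otimes\cdots\otimes I$. In particular each $\Gamma_{rs}$ with $r,s$ in one block acts this way, and so does each $\rho(\sigma_k)=\frac1{\sqrt2}(I+\gamma_{k+1}\gamma_k)$ with both endpoints in one block.

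\textbf{Step 3: braids supported away from the cuts.} Such a braid is a product of component braids, each in the braid group of the points over one block. Hence it is a word in generators whose endpoints lie in a single block, and by Step 2 its image preserves every summand and acts as the tensor product of the component images. This is the iterated form of the intertwining in Theorem~\ref{thm:subopenbook-sector}. Alternatively, one could induct on the number of cuts, applying that theorem one block at a time.

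\textbf{Main obstacle.} The delicate point is Step 2: the component braid of a block need not use only standard generators $\sigma_k$ with adjacent endpoints, because the cut arc can wind. I would handle this by conjugating with a braid $\beta_0$ that straightens the cut so the blocks become consecutive intervals of $\Delta$. Conjugating by $\rho(\beta_0)$ permutes Majoranas up to sign, since $\rho$ of a half-twist acts on Majoranas by $\gamma_k\mapsto\pm\gamma_{k+1}$. Therefore $\Gamma_{2j-1,2j}$ are mapped to the paired operators of the straightened picture. We must then check that the resulting identification is independent of $\beta_0$ up to a phase on each summand, which suffices projectively.
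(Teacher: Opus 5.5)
Your Step~2 fails as written, and your last paragraph puts the difficulty in the wrong place.

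Permuting the qubit tensor factors does not permute the Jordan--Wigner strings. The operators $\gamma_{2j-1}$ and $\gamma_{2j}$ carry a $Z$ on every qubit $j'<j$ of the original order, however you arrange the factors. So (b) presupposes that every block is an interval of consecutive pairs. Blocks need not be intervals: any partition of the pairing arcs is cut out by disjoint proper arcs, and the paper uses this in Proposition~\ref{prop:doily-subopenbook}.

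For $d=5$ take $I_1=\{1,3\}$ and $I_2=\{2,4\}$. Then $\gamma_2\gamma_5=iX_1Z_2X_3$ is an even product in block $I_1$. Yet in your positional splitting it acts as $iX_1X_3\otimes Z_2$. Here $Z_2=-\Gamma_{3,4}$ is not constant on a summand, where only $t_1t_3$ and $t_2t_4$ are fixed. So the positional identification does not intertwine the block actions.

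The repair is a fermionic reordering: conjugate by $\rho(\beta_0)$ for a braid $\beta_0$ carrying pairs to pairs and blocks to consecutive intervals. This conjugation acts on the Majorana operators by a signed permutation. That is the device of your last paragraph, but it must be applied before Step~2, and its job is to remove interleaving.

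Winding, by contrast, needs no straightening. If $a\subset C$ joins $r,s$ and $h_a=g\sigma_kg^{-1}$, the same signed-permutation property gives $\rho(h_a)=\tfrac1{\sqrt2}(I\pm\gamma_s\gamma_r)$. This is an even element of block $I$, however $a$ winds.

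The independence check you announce is false, and it is not needed. Two straightenings differ by a braid preserving the standard configuration, for instance $h_{a_j}^2$. Its image $\Gamma_{2j-1,2j}$ is not scalar on a summand once the block of $j$ contains another pair. But the proposition only asserts that some intertwining isomorphism exists. The action of a component braid on $\cH^{2^{|I|-1}}_{\eps_I}$ is itself defined only after such a choice.

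The paper's proof avoids all of this by choosing no coordinates:
\begin{itemize}
\item The block parities $\Pi_I$ commute and multiply to $\Pi$, so their joint eigenspaces are the summands.
\item Even Majorana products on disjoint index sets commute, whatever the Jordan--Wigner order.
\item Each block's even algebra acts on a summand as a full matrix algebra, so the summand is their tensor product.
\item A braid in one block acts through that block's even algebra.
\end{itemize}
This treats interleaved blocks, winding arcs and the nonstandard matchings of Section~\ref{sec:six-point} uniformly. What your repaired version adds is explicit coordinates in which each $L_t$ is a product basis vector.
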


\begin{proof}
Since the block parities $\Pi_I=\prod_{j\in I}\Gamma_{2j-1,2j}$ commute and multiply
to the total parity, their joint eigenspaces are the summands, and the
dimensions match because $2^{k-1}\prod_I2^{|I|-1}=2^{d-2}$. 
On each summand, the even products of Majorana operators generate mutually commuting full matrix algebras, 
which gives the tensor factors. A braid supported in one block acts by such products for
that block.
\end{proof}

A block of $|I|$ pairs carries $|I|-1$ qubits, so each cut trades a qubit for
a sign, and if no block contains more than three pairs then every tensor
factor is a space of at most two qubits. By
Corollary~\ref{cor:cy-heredity} a CY factorization restricts to a CY
(minimal) factorization of each $Q_I$. In that range the CY condition loses its
dependence on the factorization.

\begin{proposition}[CY at small degree]
\label{prop:cy-small-degree}
Let $Q$ be a $p$-subopenbook with $d_Q\le4$. Then whether a positive allowable
factorization of $\phi_Q$ is CY depends only on $\phi_Q$. Consequently, if every
branched piece of a paired decomposition has degree at most four, equivalently
if every tensor factor carries at most two qubits, then the CY condition is a
property of the decomposition.
\end{proposition}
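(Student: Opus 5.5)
We argue by the degree $d_Q$ of the piece, which is at least $2$ because a branched piece carries at least one pair. The plan is to show that either every positive allowable factorization of $\phi_Q$ is CY, or all of them have the same CY status.

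\textbf{Degrees $2$ and $3$.} For $d_Q=2$ the page is an annulus and $H_1(Q)\cong\Z$. The only nonzero binary class is $e_1$, so $u=1$ trivializes every allowable factorization, and CY holds automatically.

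For $d_Q=3$ the possible classes are $e_1,e_2,e_{12}$. The only obstruction to a trivialization is the simultaneous presence of all three, since $u(e_1)=u(e_2)=1$ forces $u(e_{12})=2$. Any two of them can be trivialized, as in Lemma~\ref{lem:short-cy}. To see that this depends only on $\phi_Q$:
\begin{itemize}
\item $\Mod(\Sigma_{0,3}\operatorname{rel}\partial)$ is free abelian on the three boundary twists.
\item Every simple closed curve in $\Sigma_{0,3}$ is boundary parallel.
\item Hence a positive factorization is determined, up to order, by the exponents of $\tau_{\partial_1},\tau_{\partial_2},\tau_{\partial_0}$, whose classes are $e_1,e_2,e_{12}$.
\end{itemize}
So the multiset of classes is determined by $\phi_Q$, and so is CY. Alternatively, the multiplicities $V\mathbf 1_m$ and $VV^T$ of Lemma~\ref{lem:planar-multiplicities} already determine the counts $n_1,n_2,n_{12}$: the off-diagonal entry of $VV^T$ is $n_{12}$, and the diagonal entries give $n_1,n_2$.

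\textbf{Degree $4$.} Here we follow the argument given after Corollary~\ref{cor:baykur-cy}, made rigorous. Suppose one factorization is CY with trivialization $u$. Then $u$ cannot take the value $1$ on all of $e_{12},e_{13},e_{23}$, so one of these multiplicities vanishes.

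Any other factorization has seven class-multiplicities $n_J$ with the same $V\mathbf 1_m$ and $VV^T$. These are $3+6=9$ linear equations in $7$ unknowns. We compute that the solution differences form the rank-one lattice spanned by the lantern vector: $+1$ on $e_1,e_2,e_3,e_{123}$ and $-1$ on $e_{12},e_{13},e_{23}$. This rank computation on the $7$ columns $(e_J,e_Je_J^T)$ is the main check.

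Thus $n'=n+k\cdot(\text{lantern vector})$ and $m'=m+k$. Proposition~\ref{prop:cy-minimality} gives $k\ge0$. Nonnegativity of the vanishing multiplicity among $n_{12},n_{13},n_{23}$ gives $k\le0$. Hence $k=0$, $m'=m$, and the equality case of Proposition~\ref{prop:cy-minimality} makes the other factorization CY. So the CY status is either shared by all factorizations or by none, and it depends only on $\phi_Q$.

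\textbf{Consequence for decompositions.} For the final assertion, combine this with Corollary~\ref{cor:cy-heredity}. Any positive allowable factorization of $\phi$ localizes to the pieces by Theorem~\ref{thm:positive-localization}, and it is CY if and only if each $\lambda_Q$ is. Each $\lambda_Q$'s status depends only on $\phi_Q$, which is fixed by the decomposition. The equivalence with "at most two qubits" is just $|I|\le3$, that is, $d_Q=|I|+1\le4$, as in Proposition~\ref{prop:sectorwise-tensor}.

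The expected obstacle is the degree-$4$ kernel computation: verifying that the integer relations among the $(e_J,e_Je_J^T)$ are exactly the lantern multiples. I would do this directly. The diagonal and off-diagonal entries express each $n_J$ in terms of $n_{123}$, which yields a one-parameter integer family.
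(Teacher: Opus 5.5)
Your proposal is correct and follows the paper's proof: the annulus case is immediate, the $d_Q=3$ case uses the invariants $n_1,n_2,n_{12}$ read off from $V\mathbf 1$ and $VV^T$ (your $\operatorname{Mod}(\Sigma_{0,3})$ argument is a valid additional route), the $d_Q=4$ case is the lantern argument from the end of Section~\ref{sec:cy}, and the final assertion comes from Corollary~\ref{cor:cy-heredity}. Your explicit check that integer solutions of the multiplicity equations differ only by multiples of $(1,1,1,-1,-1,-1,1)$ supplies the step the paper asserts without computation; the unbranched disk pieces with $d_Q=1$, which you set aside, are trivially CY since they admit only the empty factorization.
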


\begin{proof}
For $d_Q=2$ every vanishing cycle has class $e_1$, so $u(e_1)=1$ is a
trivialization. For $d_Q=3$ let $n_1,n_2,n_{12}$ be the multiplicities of the
classes $e_1,e_2,e_{12}$. Then $V\mathbf1=(n_1+n_{12},\,n_2+n_{12})$ and
$n_{12}$ is the off-diagonal entry of $VV^T$, so
Lemma~\ref{lem:planar-multiplicities} makes all three invariants of $\phi_Q$,
while a trivialization exists exactly when one of them vanishes. For $d_Q=4$
this is the lantern argument at the end of Section~\ref{sec:cy}. The last
assertion is Corollary~\ref{cor:cy-heredity}.
\end{proof}

\begin{corollary}[Discrete cuts are CY]
\label{cor:discrete-cy}
If the discrete paired decomposition is $(p,\beta)$-admissible, then every
positive factorization of $\phi$ is CY, and $\rho(\beta)$ is diagonal in the
basis of the lines $L_t$.
\end{corollary}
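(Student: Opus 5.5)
The plan has two parts: first the CY claim, then diagonality. Both rest on the fact that in the discrete paired decomposition every pair of branch points $\{2j-1,2j\}$ is separated from the others. So every branched piece $Q$ is an annulus of degree $d_Q=2$ lying over a single pair, and every other piece is an unbranched disk.

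For the CY claim, take any positive factorization $\lambda$ of $\phi$. By Theorem~\ref{thm:positive-localization}, after isotopy each vanishing cycle avoids $p^{-1}(\mathcal A)$, so $\lambda$ is a shuffle of positive factorizations $\lambda_Q$ of the $\phi_Q$. On an unbranched disk piece every simple closed curve is nullhomotopic, so no positive allowable factor can occur there. Such a piece in fact carries no Dehn twist at all, since $\operatorname{Mod}(D^2\operatorname{rel}\partial)$ is trivial. On an annulus piece every vanishing cycle is parallel to its core, which is the boundary curve $\partial_jP$ of class $e_j$. It follows that every factor of $\lambda$ has class $e_j$ for some $j$, and $\lambda$ is automatically allowable. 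Setting $u(e_j)=1$ for all $j$ gives a trivialization, and Proposition~\ref{prop:integral-c1} shows $\lambda$ is CY. Alternatively one can argue piecewise: Proposition~\ref{prop:cy-small-degree} handles $d_Q=2$ and Corollary~\ref{cor:cy-heredity} assembles the pieces. The one point to state carefully is that the localization in Theorem~\ref{thm:positive-localization} holds for an arbitrary positive factorization. That theorem is stated in exactly this generality, so no further work should be needed.

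For diagonality, admissibility condition (ii) says $\beta$ has a representative $f$ that is the identity near every arc of $\mathcal A$. Each complementary region therefore contains at most one pair $\{2j-1,2j\}$, and $\beta$ factors as a product of commuting component braids $\beta_C$. The branch points cannot leave their region, so each $\beta_C$ lies in the braid group of its own pair. That group is the infinite cyclic group generated by the half-twist $\sigma_{2j-1}$ on that pair, up to isotopy inside the region; as a check, conjugating it inside the disk region changes nothing. By the formula in Theorem~\ref{thm:subopenbook-sector}, $\rho(\sigma_{2j-1})$ is, up to phase, $\exp(\tfrac{\pi i}{4}\Gamma_{2j-1,2j})$, a function of $\Gamma_{2j-1,2j}$. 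Hence $\rho(\beta)$ is up to phase a product of functions of the commuting operators $\Gamma_{1,2},\dots,\Gamma_{2d-3,2d-2}$. Their joint eigenlines are exactly the $L_t$, so $\rho(\beta)$ is diagonal in that basis, acting on $L_t$ by a phase determined by $t$ and the exponents.

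Equivalently, one can invoke Proposition~\ref{prop:sectorwise-tensor} with all blocks of size one: every tensor factor $\cH^{2^{0}}_{\eps_I}$ is one-dimensional, so each summand is a single line $L_t$ preserved by $\beta$. The step I expect to need the most care is the identification of each $\beta_C$ with a power of the standard half-twist on its pair. The arc pairing used to define $\Gamma_{2j-1,2j}$ must agree with the arc joining the two points inside the region $C$. This agreement comes from the convention that a paired decomposition records the matching of $\Delta$ given by these regions.
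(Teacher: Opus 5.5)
Your proposal is correct and follows the paper's proof. You localize every positive factorization along the discrete cut by Theorem~\ref{thm:positive-localization}, note that each branched piece is an annulus whose vanishing cycles all have a single-hole class (the case $d_Q=2$, assembled by Corollary~\ref{cor:cy-heredity} or equivalently by your global trivialization $u(e_j)=1$), and get diagonality from Proposition~\ref{prop:sectorwise-tensor} with singleton blocks, of which your computation via $\exp(\tfrac{\pi i}{4}\Gamma_{2j-1,2j})$ is just an unfolding. The point you flag at the end is harmless: any half-twist on an arc with endpoints $\{2j-1,2j\}$ is conjugate to $\sigma_{2j-1}$ by a braid preserving that endpoint pair setwise, and its Ising image permutes the Majorana operators up to sign, so the image is still a function of $\Gamma_{2j-1,2j}$ whichever arc inside the region is used.
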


\begin{proof}
Every branched piece is an annulus, and
Theorem~\ref{thm:positive-localization} localizes every positive factorization
of $\phi$ along the cut, so Corollary~\ref{cor:cy-heredity} and the case
$d_Q=2$ above give the first claim. The summands of
Proposition~\ref{prop:sectorwise-tensor} are here the lines $L_t$, which
$\beta$ preserves.
\end{proof}

Every paired decomposition thus regroups the same lines $L_t$; what
it adds is the tensor product inside each sector. That is genuine extra
data, since entanglement is relative to a choice of tensor-product structure
\cite{ZanardiLidarLloyd2004}, which the lines alone do not determine. In the
anyonic description of the Ising representation these decompositions are the
fusion trees \cite{NSSFD2008}; the difference appears only once the braid is
taken into account, and is the subject of Remark~\ref{rem:fusion-tree} below.

\section{The two-qubit doily}\label{sec:six-point}

Now we focus on the case where $d=4$.
The fifteen pairs $\{r,s\}\subset \Delta$ then correspond bijectively
to the fifteen nontrivial operators $[\Gamma_{rs}]$ of $\mathsf P_2$, and
$\Gamma_{rs}$ commutes with $\Gamma_{tu}$ exactly when the two pairs are disjoint. The
maximal contexts are therefore the $6!/(2^33!)=15$ perfect matchings
$rs\mid tu\mid vw$ of $\Delta$, while a single operator generates a minimal
nontrivial context. Pairs and matchings are the points and the lines of the
two-qubit doily $W(3,2)$
\cite{BravyiTerhalLeemhuis2010,PlanatSaniga2008}, the generalized quadrangle
of order $(2,2)$: fifteen points, fifteen lines, three points on each line,
three lines through each point, forty-five flags, and
$\operatorname{Aut}(W(3,2))\cong\operatorname{Sp}(4,2)\cong S_6$.
Conjugation by $\rho(\sigma_k)$ acts on the Majorana operators as a signed
transposition, so the action of $\rho(L_p)$ on $\mathsf P_2$ factors through
$\pi|_{L_p}$ and is onto $\operatorname{Sp}(4,2)$; since $\rho(L_p)$ also
contains every $[\Gamma_{rs}]$, the $p$-liftable braids realize the full projective
two-qubit Clifford group \cite{Ahlbrecht2009,Ahlbrecht2010}.

\begin{lemma}[Contexts as arc pairings]\label{lem:context-pairing}
For every maximal context $M=rs\mid tu\mid vw$, there exists $g\in L_p$ such that
the arcs $a_j^M=g(a_j)$ pair $\Delta$ according to $M$ and 
satisfy $\ell_p(h_{a_j^M})=\tau_{\partial_j P}$ $(j=1,2,3)$. 
They form an arc pairing of the same cover $p$ in the sense of
Proposition~\ref{prop:paired-cover}. The squared half-twists
$h_{a_j^M}^2$ are then commuting pure braids in $L_p$. 
The lift factorization of their product has the $M$-independent form
\[
 \lambda_0=(\tau_{\partial_1P},\tau_{\partial_1P},\tau_{\partial_2P},
 \tau_{\partial_2P},\tau_{\partial_3P},\tau_{\partial_3P}).
\]
\end{lemma}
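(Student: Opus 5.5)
The plan is to build $g$ from the surjectivity of $\pi|_{L_p}\colon L_p\to S_6$ noted in Section~\ref{sec:branched-covers}, and then transport everything by conjugation.

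\emph{Choice of $g$.} Write the standard arc pairing of Proposition~\ref{prop:paired-cover} as the matching $M_0=12\mid34\mid56$, so that $a_j$ joins the branch points $2j-1$ and $2j$. Given $M=rs\mid tu\mid vw$, choose any permutation $\sigma\in S_6$ with $\sigma(\{1,2\})=\{r,s\}$, $\sigma(\{3,4\})=\{t,u\}$ and $\sigma(\{5,6\})=\{v,w\}$. Since $\pi|_{L_p}$ is onto, there is $g\in L_p$ with $\pi(g)=\sigma$. Then $a_j^M=g(a_j)$ are pairwise disjoint embedded arcs, because $g$ is a homeomorphism, and their endpoints realize the matching $M$.

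\emph{Lifts and arc pairing.} Exactly as in the displayed computation of Section~\ref{sec:branched-covers},
\[
\ell_p(h_{a_j^M})=\ell_p(g)\,\tau_{\partial_jP}\,\ell_p(g)^{-1}=\tau_{\partial_jP},
\]
because $\ell_p(g)$ fixes a neighborhood of $\partial P$, so it commutes with the boundary-parallel twist. To see that $\{a_j^M\}$ is an arc pairing of the same cover $p$, apply the lift $\widetilde g$ of a representative: $p^{-1}(a_j^M)=\widetilde g(p^{-1}(a_j))$ contains $\widetilde g(\gamma_j)$. This curve is isotopic to $\partial_jP$, since $\widetilde g$ fixes $\partial P$ pointwise and a planar mapping class preserves the isotopy class of each boundary-parallel curve. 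The local monodromies at the endpoints of $a_j^M$ are conjugated by the sheet map of $\widetilde g$. Because $\widetilde g$ is the identity near the boundary, this sheet map is trivial with respect to the boundary-induced labeling, so the endpoints of $a_j^M$ still have monodromy $(0j)$.

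\emph{Commutation and the form of $\lambda_0$.} The arcs $a_j^M$ are disjoint, so their disk neighborhoods can be chosen disjoint. Hence the half-twists, and therefore their squares, commute. Each square is pure and lies in $L_p$, with lift $\tau_{\partial_jP}^2$. The product $\prod_j h_{a_j^M}^2$, written as six positive half-twists, is factorwise liftable. Its lifted factorization is
\[
(\tau_{\partial_1P},\tau_{\partial_1P},\tau_{\partial_2P},\tau_{\partial_2P},\tau_{\partial_3P},\tau_{\partial_3P}),
\]
which does not depend on $M$.

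The main obstacle is the claim that $g$ acts trivially on sheet labels and on the boundary classes. I would handle it by working with a representative $\widetilde g$ that fixes a collar of $\partial P$, and by noting that the labeling in Proposition~\ref{prop:paired-cover} is read off from $\partial P$, which $\widetilde g$ fixes.
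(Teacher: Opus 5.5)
Your proposal is correct and follows the paper's proof. You choose $g\in L_p$ using the surjectivity of $\pi|_{L_p}$, use centrality of boundary-parallel twists to get $\ell_p(h_{g(a_j)})=\tau_{\partial_jP}$, and expand the product of squares to obtain $\lambda_0$. You also spell out details the paper leaves implicit, namely transporting the monodromy loops by $\widetilde g$ to keep $(0j)$ and using disjointness of the arcs for commutation, where the paper simply reads the monodromy off the lifted twist.
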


\begin{proof}
The restriction of $\pi$ to $L_p$ is onto, so $g$ may be chosen with $\pi(g)$
carrying the ordered matching $12\mid34\mid56$ to $M$. Then the computation of Section~\ref{sec:branched-covers} gives
$\ell_p(h_{g(a_j)})=\ell_p(g)\tau_{\partial_j P}\ell_p(g)^{-1}=\tau_{\partial_j P}$. 
This implies that $a_j^M$ has the same local monodromy $(0j)$ at both endpoints. 
Expanding the product of the squares gives $\lambda_0$.
\end{proof}

This is Example~\ref{ex:separating-pairs} with $d=4$ and $k_1=k_2=k_3=2$ yielding 
\[
 W_{\lambda_0}\cong\natural^3T^*S^2 
 \quad\textrm{with}\quad
 (\partial W_{\lambda_0},\xi_{\lambda_0})\cong\#^3(\mathbb{RP}^3,\xi_{\mathrm{std}}).
\]
This object itself does not distinguish contexts, namely, by
Lemma~\ref{lem:context-pairing} the lift is $\lambda_0$ for all fifteen of
them. This is forced, since every vanishing cycle of $\lambda_0$ is parallel
to a boundary component and fixed by every boundary-fixed mapping
class, while $\pi(L_p)=S_6$ permutes the
fifteen matchings transitively. What the lemma does provide is that the
whole doily is available inside the single presentation $(P,[\lambda_0],p)$: its
points are the pairs of branch points, its lines are the arc pairings of $p$,
and its flags are the arc pairings with a distinguished pair. Thus, the contexts
are distinguished instead by the cuts. 

\begin{proposition}[Paired decompositions are the subcontexts]
\label{prop:doily-subopenbook}
Let $M$ be a maximal context and write $\Gamma_1,\Gamma_2,\Gamma_3$
for its operators, so that $\Gamma_1\Gamma_2\Gamma_3$ is a scalar and
$\langle M\rangle\cong\mathbb F_2^2$. The paired decompositions whose matching
is $M$ are the five partitions of its three pairs, and sending such a
decomposition to the subgroup of $\langle M\rangle$ generated by its block
parity is a bijection onto the set of subgroups of
$\langle M\rangle$, precisely, onto the set of subcontexts of $M$ in the following table.
\begin{center}
\normalfont\small
\begin{tabular}{@{}llll@{}}
\hline
partition of the pairs & subcontext & branched pieces & sectors\\
\hline
$\{1,2,3\}$ & $0$ & $\Sigma_{0,4}$ & two qubits\\
$\{j\}\sqcup\{l,m\}$ & $\langle \Gamma_j\rangle$ & annulus, $\Sigma_{0,3}$
 & one qubit, two signs\\
$\{1\}\sqcup\{2\}\sqcup\{3\}$ & $\langle M\rangle$ & three annuli
 & four lines\\
\hline
\end{tabular}
\end{center}
\end{proposition}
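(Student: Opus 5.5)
Fix the maximal context $M=rs\mid tu\mid vw$. By Lemma~\ref{lem:context-pairing} there is $g\in L_p$ realizing $M$ as an arc pairing $a^M_1,a^M_2,a^M_3$ of $p$. Transporting everything by $g$, we may therefore assume $M=12\mid34\mid56$, with $\Gamma_j=\Gamma_{2j-1,2j}$ and the standard pairing of Proposition~\ref{prop:paired-cover}. Under the identification made before Proposition~\ref{prop:sectorwise-tensor}, a paired decomposition consists of a set partition of the three pairs into blocks together with an arc pairing of each branched piece, and its matching is the union of these block pairings. If the matching is required to be $M$, each block $I$ must be paired by exactly the pairs of $M$ lying in $I$. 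The decomposition is then determined by the partition of $\{1,2,3\}$. Conversely, every partition is realized by cutting along arcs that separate the blocks, as in the iterated cut of Theorem~\ref{thm:subopenbook-sector}. Since the Bell number $B_3=5$, there are exactly five such decompositions, and they form the rows of the table (one row for $\{1,2,3\}$, three for $\{j\}\sqcup\{l,m\}$, one for the discrete partition).

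Next I compute the block parities $\Pi_I=\prod_{j\in I}\Gamma_j$ modulo phases in $\mathsf P_2$. The operators $\Gamma_1,\Gamma_2,\Gamma_3$ pairwise commute, and $\Gamma_1\Gamma_2\Gamma_3=\pm i^3\gamma_1\cdots\gamma_6$ is the total parity $\Pi$, which acts as the scalar $\eta$ on $\cH^{2}_\eta$. Hence $[\Gamma_3]=[\Gamma_1][\Gamma_2]$ and $\langle M\rangle=\{0,[\Gamma_1],[\Gamma_2],[\Gamma_3]\}\cong\mathbb F_2^2$. Applying the map to each row gives the following.
\begin{itemize}
\item For $\{1,2,3\}$, the single block parity is $\Pi$, which is trivial, so the image is $0$.
\item For $\{j\}\sqcup\{l,m\}$, the parities are $\Gamma_j$ and $\Gamma_l\Gamma_m\sim\Gamma_j$, so the image is $\langle\Gamma_j\rangle$.
\item For the discrete partition, the image is all of $\langle M\rangle$.
\end{itemize}
These five subgroups are distinct and exhaust the subgroups of $\mathbb F_2^2$ (the trivial group, the three lines, and the whole group), so the map is a bijection.

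The remaining columns follow from earlier results. The branched pieces are $Q_I\cong\Sigma_{0,|I|+1}$, namely $\Sigma_{0,4}$, an annulus together with $\Sigma_{0,3}$, or three annuli. The sectors come from Proposition~\ref{prop:sectorwise-tensor}, since a block of $|I|$ pairs carries $|I|-1$ qubits.
\begin{itemize}
\item For $\{1,2,3\}$ there is one summand of two qubits.
\item For $\{j\}\sqcup\{l,m\}$ there are two sign choices $\eps_{\{j\}}$, the other sign being determined by $\eta$, each with a one-qubit factor.
\item For the discrete partition there are four lines $L_t$ with $t_1t_2t_3=\eta$.
\end{itemize}

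I expect the main obstacle to be the first step: showing that a paired decomposition with matching $M$ is determined by its partition alone. This requires that the arc pairing inside each $Q_I$ can only be the restriction of $M$, and that different arc systems inducing the same partition are identified. The identification convention stated before Proposition~\ref{prop:sectorwise-tensor} (same partition and same matching) settles the second point by definition. For the first point, I would argue that the matching of a paired decomposition is, by construction, the union of its blocks' pairings, so it equals $M$ only if each block pairing is $M$ restricted to that block.
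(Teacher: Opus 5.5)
Your proof is correct and follows the paper's argument: the block parities $\prod_{j\in I}\Gamma_j$ reduce to $0$, $\langle\Gamma_j\rangle$ and $\langle M\rangle$ because $\Gamma_1\Gamma_2\Gamma_3$ is a scalar, and these exhaust the five subgroups of $\mathbb F_2^2$. The differences are cosmetic: you transport to $12\mid34\mid56$ by $g$ and cite the iterated cut for the branched pieces, whereas the paper checks admissibility directly from the sheet orbits (size two gives an annulus, size three gives $\Sigma_{0,3}$, the rest are unbranched disks); also, the parity-$\eta$ space is $\cH^4_\eta$, not $\cH^2_\eta$.
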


\begin{proof}
The pairing arcs are disjoint, so any prescribed partition of them is
realized by disjoint proper arcs of the disk, and the resulting decomposition
is admissible: a piece carrying one pair has branched sheet orbit of size two
and lifts to an annulus, a piece carrying two pairs has orbit of size three
and lifts to $\Sigma_{0,3}$, and the other components are unbranched disks. 
The parity operator of a block $I$ is $\prod_{j\in I}\Gamma_j$. 
Since the product $\Gamma_1\Gamma_2\Gamma_3$ is a scalar, the coarsest
partition gives the trivial subgroup, a partition with singleton $\{j\}$
gives $\langle \Gamma_j\rangle$ from both of its blocks, and the discrete
partition gives all of $\langle M\rangle$; these exhaust the five subgroups
of $\mathbb F_2^2$, and the assignment is injective.
\end{proof}

For the middle row, Theorem~\ref{thm:subopenbook-sector} with $d=4$
identifies the maximal contexts of the reduced one-qubit system with the
three lines of $W(3,2)$ through the point $[\Gamma_j]$. For $M=12\mid34\mid56$
and $j=3$ these are
\[
 12\mid34\mid56,\qquad 13\mid24\mid56,\qquad 14\mid23\mid56,
\]
reducing to the three one-qubit Pauli measurements; on $E_\eps$ one has
$\Gamma_{34}=\eta\eps \Gamma_{12}$ because $\Gamma_{12}\Gamma_{34}\Gamma_{56}=\eta I$. Thus the
fifteen points, the fifteen lines and the forty-five flags of $W(3,2)$ are
realized by the pairs of branch points, the arc pairings of $p$, and the
$15\times3$ paired decompositions with a singleton block, the residue of a
point being the three arc pairings of the $p$-subopenbook attached to it. The
same incidence poset, with its forty-five inclusions and its fifteen
five-element building blocks, is the poset of contexts of $\mathsf P_2$ in
\cite{MoriBayesian2026}, a building block there being the lattice of
Proposition~\ref{prop:doily-subopenbook}. Thus,
we can see that the doily is carried by the system of $p$-subopenbooks of one
particular presentation.

\begin{remark}[Which subcontexts survive]
\label{rem:admissible-subcontexts}
Admissibility can be thought of as a condition on the braid, so a general
$\beta\in L_p$ realizes only part of the lattice of
Proposition~\ref{prop:doily-subopenbook}.
If the discrete decomposition is $(p,\beta)$-admissible, then
Proposition~\ref{prop:sectorwise-tensor} makes $\beta$ preserve each of the four
lines $L_t$, that is, $\rho(\beta)$ is diagonal in the basis determined by $M$. 
No such decomposition is admissible once $\rho(\beta)$ fails to be
diagonal. Both cases occur. A braid represented by a word in the
half-twists $h_{a_j^M}$ is the identity near any cut separating the pairing
arcs, so every partition is admissible for it, and indeed each
$\rho(h_{a_j^M})$ is a function of $\Gamma_j$ and hence diagonal. If instead
$a$ joins two branch points $r,s$ lying in different pairs, then $\{r,s\}$ meets
each of those pairs in one point, so $\Gamma_{rs}$ anticommutes with two of
$\Gamma_1,\Gamma_2,\Gamma_3$; neither $\rho(h_a^2)=\Gamma_{rs}/\operatorname{U}(1)$
nor $\rho(h_a)$ is then diagonal, and the decomposition separating those two
pairs is not admissible for $h_a$. Thus the admissible paired decompositions of
$\beta$ form an invariant of $\beta$, and the next proposition shows that it is
strictly finer than both $\rho(\beta)$ and $\ell_p(\beta)$.
\end{remark}

\begin{proposition}[Separation by cuts]
\label{prop:lift-separates}
For each maximal context $M$ put
$\beta_M=h_{a_1^M}^2h_{a_2^M}^2h_{a_3^M}^2\in L_p$. Then, for all fifteen $M$,
\[
 \rho(\beta_M)=I/\operatorname{U}(1),
 \qquad
 \ell_p(\beta_M)=[\lambda_0],
\]
so that the quantum operation, the openbook and the Stein filling are the same.
Nevertheless the discrete paired decomposition of a maximal context $M'$ is
$(p,\beta_M)$-admissible if and only if $M'=M$. Hence the admissible paired
decompositions are determined neither by $\rho(\beta)$ nor by $\ell_p(\beta)$.
\end{proposition}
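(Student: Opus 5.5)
The plan has three parts: compute the quantum image of $\beta_M$, compute its lift, and then decide for which $M'$ the discrete cut is admissible.

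\emph{Quantum image.} Write $M=rs\mid tu\mid vw$. As recalled before Theorem~\ref{thm:subopenbook-sector}, $\rho(h_a^2)=\Gamma_{rs}/\operatorname{U}(1)$ for a half-twist $h_a$ with endpoints $r,s$. Hence
\[
\rho(\beta_M)=\Gamma_{rs}\Gamma_{tu}\Gamma_{vw}/\operatorname{U}(1).
\]
The three pairs partition $\Delta$. So, after reordering the anticommuting Majorana operators, the product is a scalar multiple of $\gamma_1\cdots\gamma_6$, which is a multiple of the parity $\Pi$. On each parity eigenspace $\Pi$ acts as $\eta I$, so $\rho(\beta_M)=I/\operatorname{U}(1)$.

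\emph{Lift.} Lemma~\ref{lem:context-pairing} gives $\ell_p(h_{a_j^M})=\tau_{\partial_jP}$ for $j=1,2,3$. Since $\ell_p$ is a homomorphism, $\ell_p(\beta_M)=[\lambda_0]$ for every $M$. This fixes the openbook, and by Example~\ref{ex:separating-pairs} it also fixes the Stein filling $\natural^3T^*S^2$.

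\emph{Admissibility when $M'=M$.} The arcs $a_j^M$ are pairwise disjoint. Choose three disjoint disks, each a regular neighbourhood of one arc, and take as cut arcs the boundaries of these disks pushed into proper arcs separating the three disks from one another. Each $h_{a_j^M}^2$ has a representative supported inside its disk, so $\beta_M$ is the identity near the cut. Condition (i) of Definition~\ref{def:admissible-arc-system} was already verified in the proof of Proposition~\ref{prop:doily-subopenbook}. So the discrete paired decomposition of $M$ is $(p,\beta_M)$-admissible.

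\emph{Non-admissibility when $M'\ne M$.} Suppose the discrete decomposition of $M'$ were admissible for $\beta_M$. I would not argue with $\rho$, since $\rho(\beta_M)$ is trivial and therefore diagonal in every basis. Instead I would argue topologically. If a representative $f$ of $\beta_M$ is the identity near the cut arcs, then $f$ preserves each complementary disk $C$. It therefore preserves the set of branch points in $C$, which is a pair of $M'$. So $\pi(\beta_M)$ would have to preserve every pair of $M'$. This is useless as it stands, because $\beta_M$ is pure and preserves every pair.

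The real obstruction is finer. The restriction $f|_C$ represents a braid on the two points of $C$ that fixes $\partial C$. Because the cut separates the pairs, the induced braid on each other pair of $M'$ is obtained from $\beta_M$ by forgetting the remaining points. It is therefore determined in the pure braid group.

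My argument uses the linking numbers (exponent sums of the generators $A_{xy}$) of the pure braid $\beta_M\in P_6$. These are $1$ for the pairs of $M$ and $0$ for all other pairs of points. A pure braid that is the identity near arcs separating the pairs of $M'$ splits as a product of braids supported in the three disks. Such a product has linking number $0$ for every pair $\{x,y\}$ that is not a pair of $M'$. Now take $M'\neq M$. Some pair $\{x,y\}$ of $M$ is not a pair of $M'$, and $\beta_M$ has linking number $1$ on it. This contradicts the splitting.

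I expect the main obstacle to be justifying this splitting rigorously. Concretely, one must show that ``identity near the cut'' means $\beta_M$ lies in the image of $B_2\times B_2\times B_2$, with the three factors included via the disks and composed with the identity on the unbranched region. Once that is established, the homomorphism from the pure braid group to $\Z^{15}$ given by linking numbers finishes the argument.

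Finally, comparing the fifteen braids $\beta_M$: they have the same $\rho$ and the same $\ell_p$, but different sets of admissible discrete decompositions. This gives the last assertion.
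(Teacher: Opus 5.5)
Your proof is correct. The computations of $\rho(\beta_M)$ and $\ell_p(\beta_M)$ coincide with the paper's. The genuine difference is in the step showing that $M'\neq M$ is not admissible.

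The paper argues geometrically. It views $h_{a_j^M}^2$ as the Dehn twist about $\partial N(a_j^M)$ and notes that these three curves are disjoint. It then uses the fact that a product of such twists fixes a proper arc up to isotopy only when the arc misses all three curves. Hence condition (ii) forces every pair of $M$ into a single complementary component, and so $M'=M$.

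You instead use the abelianization $P_6\to\mathbb Z^{15}$ given by linking numbers. Restricting the representative $f$ from (ii) to each complementary disk and extending by the identity writes $\beta_M$ as a commuting product. Each factor is supported in a disk containing one pair of $M'$, or no branch point at all. Such a product has $\operatorname{lk}_{xy}=0$ for every pair $\{x,y\}$ not in $M'$, whereas $\operatorname{lk}_{xy}(\beta_M)=1$ on the pairs of $M$.

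The splitting you flagged as the main obstacle is in fact immediate from (ii) as stated. The pieces $f|_C$, extended by the identity, have disjoint supports and compose to $f$. For the linking numbers of $h_{a_j^M}^2$ you should add one line of justification. It holds because the twist is supported in a disk containing only the two endpoints of $a_j^M$; equivalently, linking numbers transform under conjugation by $g$ via $\pi(g)$.

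What each route buys:
\begin{itemize}
\item Your argument needs no intersection-number input for multitwists, and it gives a general obstruction. Any pure braid with nonzero linking number on $\{x,y\}$ admits no cut satisfying (ii) that separates $x$ from $y$. It is, however, blind to braids whose linking numbers vanish.
\item The paper's argument depends on the multitwist structure of $\beta_M$, but it identifies exactly which arcs $\beta_M$ fixes.
\item You also verify explicitly that the discrete decomposition of $M$ itself is admissible for $\beta_M$. The paper's proof leaves this direction implicit, to Remark~\ref{rem:admissible-subcontexts}.
\end{itemize}
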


\begin{proof}
Since $\rho(h_{a_j^M}^2)=\Gamma_j/\operatorname{U}(1)$ and
$\Gamma_1\Gamma_2\Gamma_3$ is a scalar, the product $\rho(\beta_M)$ is a scalar,
and $\ell_p(\beta_M)=[\lambda_0]$ is Lemma~\ref{lem:context-pairing}. For the
last claim, each $h_{a_j^M}^2$ is the Dehn twist along the boundary of a regular
neighborhood of $a_j^M$, and these three curves are disjoint. Such a product of
twists fixes a proper arc up to isotopy exactly when the arc is disjoint from
all three curves, so condition (ii) says that no pair of $M$ is separated by
$\mathcal A$. A cut realizing the discrete decomposition of $M'$ separates the
three pairs of $M'$, so every pair of $M$ lies inside a pair of $M'$, which
forces $M'=M$.
\end{proof}

The content here is not that $\rho$ has a kernel, which is clear, but that the
kernel carries the datum that $\rho$ does not. The fifteen braids differ in
which tensor-product structures are available to them, and availability is a
condition on the lift.

\begin{remark}[Comparison with fusion trees]
\label{rem:fusion-tree}
The combinatorial shadow of a paired decomposition is familiar from the anyonic
description of the Ising representation. There $\cH^{2^{d-2}}_\eta$ is the space
of conformal blocks of $2d-2$ Ising anyons in a disk, the eigenvalue of
$\Gamma_{rs}$ is the fusion channel of the pair $\{r,s\}$, and a grouping of the
branch points together with a pairing inside each group is a fusion tree, whose
fusion basis consists of the lines $L_t$ \cite{NSSFD2008}. A fusion tree is
however a choice of basis, made independently of the braid. The condition on
$\beta$ that it can express is that $\rho(\beta)$ be diagonal in the associated
basis, which admissibility implies. The converse fails as strongly as it can: by
Proposition~\ref{prop:lift-separates} each $\rho(\beta_M)$ is a scalar and hence
diagonal in every basis, while the discrete paired decomposition of $M'$ is
admissible for $\beta_M$ only when $M'=M$. Admissibility is a condition on the
lift, and accordingly a paired decomposition is a datum on the filling: a
boundary connected sum decomposition along which positive factorizations
localize and the CY condition is hereditary.
\end{remark}

\begin{remark}[Why six points] 
The choice $d=4$ is the exact one.
The degree-$d$ paired cover carries $d-2$ qubits. Four branch points give
one qubit, but complementary bilinears become projectively equal after fixing
parity, so endpoint pairs are redundant. At six branch points
$\binom62=15=4^2-1$, the bilinears are exactly all nontrivial two-qubit
Paulis, and $S_6\cong\operatorname{Sp}(4,2)$ supplies their full symplectic
symmetry. The match fails from three qubits onward: with eight branch points
only $\binom82=28$ of the $4^3-1=63$ nontrivial Paulis are bilinears, the
$105$ perfect matchings form part of the $135$ maximal contexts, and $S_8$ is
a proper subgroup of $\operatorname{Sp}(6,2)$. This agrees with the Ising
braid image being the full Clifford group for two qubits but a proper
subgroup for more \cite{Ahlbrecht2009,Ahlbrecht2010}.
Since a tensor decomposition into qubits is named by a maximal context with
ordered generators, it follows that beyond degree four a branched piece is a
space of dimension $2^{d_Q-2}$ whose description as $d_Q-2$ qubits the cover no
longer supplies.
A threshold appears on the topological side as well. For $d\le3$ Wendl's
theorem makes the Stein filling unique, and for $d=4$ the CY condition is a
property of the monodromy by Proposition~\ref{prop:cy-small-degree}, whereas
the fillings of Corollary~\ref{cor:baykur-cy} at $d=6$ are infinitely many and
pairwise non-homeomorphic with the same $b_1$, $b_2$ and $\chi$, so that there
the CY condition retains only its numerical content. Cutting into pieces of
degree at most four is therefore not a convenience: it is the range in which
the bilinears exhaust the Pauli operators and the CY condition still constrains
the filling itself. We do not know that the two thresholds coincide, the
quantum one being crossed at degree five and the topological one by degree six.
\end{remark}

A paired decomposition fixes the sectors but not the identification
of a sector with a tensor product of qubits, and that identification is what
separability (i.e., non-entanglement property) refers to. No cut supplies it,
since each cut trades a qubit for a sign and $k$ blocks leave only $d-1-k$
qubits; the full $d-2$ are present only before any cut, and there as a
dimension rather than as a tensor product. Calling $\cH^{2^{d-2}}_\eta$ a space
of $d-2$ qubits thus presupposes a further choice, namely a maximal context
with ordered generators.

\begin{example}[Different qubit coordinates]
\label{ex:pairing-coordinates}
For the pairing $12\mid34\mid56$ put $\Gamma_1=\Gamma_{12}$, $\Gamma_2=\Gamma_{34}$, $\Gamma_3=\Gamma_{56}$,
so that $\Gamma_1\Gamma_2\Gamma_3=\eta I$ on $\cH^4_\eta$, and let $v_{lm}$
$(l,m\in\{0,1\})$ be unit vectors spanning the four joint eigenlines, with
$\Gamma_1v_{lm}=(-1)^l v_{lm}$ and $\Gamma_2v_{lm}=(-1)^m v_{lm}$. Writing $f_0,f_1$ for
the standard basis of $\C^2$, the ordered pair $(\Gamma_1,\Gamma_2)$ determines the
isomorphism
\[
 T_{12}:\cH^4_\eta\longrightarrow\C^2\otimes\C^2,
 \qquad v_{lm}\longmapsto f_l\otimes f_m,
\]
whereas the ordered pair $(\Gamma_1,\eta \Gamma_3)$ determines $T_{13}$. Since
$\eta \Gamma_3=\Gamma_1\Gamma_2$ has eigenvalue $(-1)^{l+m}$ on $v_{lm}$, one has
$T_{13}(v_{lm})=f_l\otimes f_{l\oplus m}$ with $\oplus$ modulo two.
For the fixed vector $\psi=\frac{1}{\sqrt{2}}(v_{00}+v_{10})$ this gives
\[
 T_{12}(\psi)=\tfrac1{\sqrt2}(f_0+f_1)\otimes f_0,
 \qquad
 T_{13}(\psi)=\tfrac1{\sqrt2}(f_0\otimes f_0+f_1\otimes f_1),
\]
a pure tensor in the first coordinates and a tensor of rank two in the
second, although the pairing and the chosen eigenvectors are unchanged.
\end{example}

The example displays, in the smallest possible case, the datum that no cut
supplies: the pairing and the eigenlines are fixed throughout, and only the
choice of ordered generators changes.
This leads to the question of which changes of coordinates are compatible
with admissible cuts and CY factorizations, and of how the separability of a
fixed state depends on those choices. We also ask how much of $\beta\in L_p$ is
determined by its admissible paired decompositions, and which sublattices of
Proposition~\ref{prop:doily-subopenbook} occur. Behind both is the view that a
predictive theory carries a limit of applicability, given by a selection of
contexts which shifts as data is obtained \cite{MoriBayesian2026}. The tensor
decomposition on which such a selection operates is there a hypothesis; here it
is produced by the geometry, a paired decomposition being what makes the
available operators split blockwise, while the selection itself is the set of
decompositions admissible for a given braid.

\section*{Declaration of competing interest}
The author declares that he has no known competing financial interests or
personal relationships that could have appeared to influence the work
reported in this paper.

\section*{Data availability}
No data was used for the research described in the article.

\section*{Declaration of generative AI and AI-assisted technologies in the manuscript preparation process}
The author utilized ChatGPT (OpenAI) and Claude (Anthropic) as supportive tools during the preparation of this work. These tools were employed to assist with literature search, verify computations, facilitate mathematical discussions that helped refine key arguments, and improve stylistic clarity. All definitions, proofs, original research direction, and final text remain entirely the author's own, and the author assumes full responsibility for the contents of this publication.


\end{document}